\newtheorem{thm}{Theorem}[section]
\newtheorem{prp}[thm]{Proposition}
\newtheorem{cor}[thm]{Corollary}
\newtheorem{lma}[thm]{Lemma}
\theoremstyle{definition}
\theoremstyle{remark}
\newtheorem{rmk}[thm]{Remark}
\newcommand{\R}{{\mathbb{R}}}
\newcommand{\C}{{\mathbb{C}}}
\newcommand{\Z}{{\mathbb{Z}}}
\newcommand{\area}{\operatorname{area}}
\newcommand{\lk}{\operatorname{lk}}
\newcommand{\slk}{\operatorname{slk}}
\newcommand{\Aug}{\operatorname{Aug}}
\newcommand{\qAug}{\widehat{\operatorname{Aug}}}
\newcommand{\ind}{\operatorname{ind}}
\begin{document}

\title{Knot contact homology and open Gromov-Witten theory}
\author{Tobias Ekholm}
\address{Department of mathematics, Uppsala University, Box 480, 751 06 Uppsala, Sweden}
\email{tobias.ekholm\@@math.uu.se}
\address{Institut Mittag-Leffler, Aurav 17, 182 60 Djursholm, Sweden}
\email{ekholm\@@mittag-leffler.se}
\thanks{The author is supported by the Knut and Alice Wallenberg Foundation and by the Swedish Research Council.}

\subjclass[2010]{Primary 53D42; Secondary 53D37, 53D45, 57R17, 57M25}

\maketitle

\begin{abstract}
Knot contact homology studies symplectic and contact geometric properties of conormals of knots in 3-manifolds using holomorphic curve techniques. It has connections to both mathematical and physical theories. On the mathematical side, we review the theory, show that it gives a complete knot invariant, and discuss its connections to Fukaya categories, string topology, and micro-local sheaves. On the physical side, we describe the connection between the augmentation variety of knot contact homology and Gromov-Witten disk potentials, and discuss the corresponding higher genus relation that quantizes the augmentation variety.
\end{abstract}

\section{Introduction}\label{Sec:intr}
If $M$ is an oriented 3-manifold then its 6-dimensional cotangent bundle $T^{\ast}M$ with the closed non-degenerate 2-form $\omega=-d\theta$, where $\theta=pdq$ is the Liouville or action 1-form, is a symplectic manifold. As a symplectic manifold, $T^{\ast}M$ satisfies the Calabi-Yau condition, $c_{1}(T^{\ast}M)=0$, and is thus a natural ambient space for the topological string theory of physics and its mathematical counterpart, Gromov-Witten theory. 

If $K\subset M$ is a knot then its \emph{Lagrangian conormal} $L_{K}\subset T^{\ast}M$ of covectors along $K$ that annihilate the tangent vector of $K$ is a Lagrangian submanifold (i.e., $\omega|_{L_{K}}=0$) diffeomorphic to $S^{1}\times\R^{2}$. Lagrangian submanifolds provide natural boundary conditions for open string theory or open Gromov-Witten theory, that counts holomorphic curves with boundary on the Lagrangian. 

Here we will approach the Gromov-Witten theory of $L_{K}$ from geometric data at infinity. At infinity, the pair $(T^{\ast}M, L_{K})$ has ideal contact boundary $(ST^{\ast}M,\Lambda_{K})$, the unit sphere cotangent bundle $ST^{\ast}M$ with the contact form $\alpha=\theta|_{ST^{\ast}M}$ and $\Lambda_{K}$ the \emph{Legendrian conormal} ($\alpha|_{\Lambda_{K}}=0$) $\Lambda_{K}=L_{K}\cap ST^{\ast}M$. In what follows we will restrict attention to the most basic cases of knots in 3-space or the 3-sphere, $M=\R^{3}$ or $M=S^{3}$.

\subsection{Mathematical aspects of knot contact homology}\label{sec:math}
There is a variety of holomorphic curve theories, all interconnected, that can be applied to distinguish objects up to deformation in contact and symplectic geometry. Knot contact homology belongs to a framework of such theories called Symplectic Field Theory (SFT) \cite{EGH}. More precisely, it is the most basic version of SFT, the Chekanov-Eliashberg dg-algebra $CE(\Lambda_{K})$, of the Legendrian conormal torus $\Lambda_{K}\subset ST^{\ast} \R^{3}$ of a knot $K\subset \R^{3}$. The study of knot contact homology was initiated by Eliashberg, see \cite{yasha}, around 2000 and developed from a combinatorial perspective by Ng \cite{Ngframed,Ngtransverse} and with holomorphic curve techniques in \cite{EENS,EENStransverse}.

Our first result states that the contact deformation class of $\Lambda_{K}$ encodes the isotopy class of $K$. Let $p\in \R^{3}$ be a point not on $K$ and let $\Lambda_{p}\subset ST^{\ast}\R^{3}$ denote the Legendrian conormal sphere of $p$. We consider certain filtered quotients of $CE(\Lambda_{K}\cup\Lambda_{p})$, called $R_{Kp}$, $R_{pK}$, and $R_{KK}$, together with a product operation $m\colon R_{Kp}\otimes R_{pK}\to R_{KK}$, borrowed from wrapped Floer cohomology. 

\begin{thm}\label{t:complete}
\cite[Theorem 1.1]{ENS}	
Two knots $K,J\subset \R^{3}$ are isotopic if and only if the triples $(R_{Kp},R_{pK},R_{KK})$ and $(R_{Jp},R_{pJ},R_{JJ})$, with the product $m$, are quasi-isomorphic. It follows in particular that $\Lambda_{K}$ and $\Lambda_{J}$ are (parameterized) Legendrian isotopic if and only if $K$ and $J$ are isotopic.
\end{thm}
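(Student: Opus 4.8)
The plan is to prove the two directions separately, with the ``only if'' direction being elementary and the ``if'' direction — recovering the isotopy class of $K$ from the algebraic data — being the heart of the matter. For the easy direction, if $K$ and $J$ are isotopic then the isotopy extends (by compact support and the isotopy extension theorem) to an ambient contactomorphism of $ST^{\ast}\R^{3}$ carrying $\Lambda_{K}\cup\Lambda_{p}$ to $\Lambda_{J}\cup\Lambda_{p'}$, and the invariance properties of SFT together with a further isotopy moving $p'$ back to $p$ give a quasi-isomorphism of the triples intertwining the product $m$. The final sentence of the theorem in this direction is immediate: an isotopy of $K$ induces a parameterized Legendrian isotopy of $\Lambda_{K}$.

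For the substantive direction, the strategy is to extract from the triple $(R_{Kp}, R_{pK}, R_{KK})$ with product $m$ a topological object that is manifestly a complete invariant of $K$, namely (a model for) the based loop space homology or the cord algebra / string-topological invariant of the knot complement, and then invoke a Gordon--Luecke type statement. Concretely, I would first identify $R_{Kp}$ and $R_{pK}$ with wrapped Floer cohomology complexes $CW^{\ast}(L_{K}, L_{p})$ and $CW^{\ast}(L_{p}, L_{K})$ of the conormal Lagrangians inside $T^{\ast}\R^{3}$, using the SFT-to-wrapped-Floer correspondence; the point $p$ plays the role of a cotangent fiber, so $L_{p}$ generates the wrapped Fukaya category of $T^{\ast}\R^{3}$ and $CW^{\ast}(L_{p},L_{p})$ computes chains on the based loop space $\Omega\R^{3}\simeq \ast$. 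Then the $A_\infty$-module structure of $CW^{\ast}(L_{K},L_{p})$ over $CW^{\ast}(L_{p},L_{p})$, together with the product $m$, recovers the homotopy type of the pair $(\R^{3}\setminus K$, fiber$)$, or more precisely the fundamental group $\pi_{1}(\R^{3}\setminus K)$ together with its peripheral structure (the conjugacy class of the meridian-longitude subgroup), since the wrapped invariants of the conormal see exactly the loops in the knot complement.

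The key step, and the expected main obstacle, is showing that one recovers not just $\pi_{1}(\R^{3}\setminus K)$ as an abstract group but the full peripheral system — the knot group together with the meridian and longitude — since it is classical (Waldhausen, Gordon--Luecke) that the peripheral system is a complete knot invariant whereas the knot group alone is not. This is where the refined filtered quotients $R_{Kp}, R_{pK}, R_{KK}$ and the product $m$ must be used in an essential way: the linking information and the action filtration inherited from $CE(\Lambda_K\cup\Lambda_p)$ should encode the homotopy classes of paths that begin and end on $\Lambda_K$, i.e., the conormal's own fundamental group $\pi_1(\Lambda_K)=\Z^2$ mapping into $\pi_1(\R^3\setminus K)$, which is precisely the peripheral subgroup with its preferred basis. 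I would make this precise by a degree-and-action count on the generators of $R_{KK}$ in low degree, extracting the meridian as the class of the Reeb chord of minimal action corresponding to the cotangent direction along $K$ and the longitude from the $S^1$-factor of $\Lambda_K$; the product $m\colon R_{Kp}\otimes R_{pK}\to R_{KK}$ then pins down how these sit inside the loop space of the complement. Once the peripheral system is reconstructed, Gordon--Luecke's theorem finishes the ``if'' direction, and the parameterized Legendrian isotopy statement follows because a parameterized Legendrian isotopy of $\Lambda_K$ induces a quasi-isomorphism of the triples, so by what has been shown $K$ and $J$ are isotopic.
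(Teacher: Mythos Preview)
Your overall strategy---recover the knot group with its peripheral structure from the triple, then invoke a classical 3-manifold rigidity theorem---matches the paper's. But the route differs and there is a real gap. The paper does not pass through wrapped Floer cohomology for the proof (that viewpoint is mentioned only as a reinterpretation afterwards). Instead it builds an explicit string-topology model and proves, via Propositions~\ref{prp:iso1} and~\ref{prp:iso2}, that a geometric chain map $\Phi$ identifies the triple with $(R,\,R(1-\mu),\,\hat R + R(1-\mu))$ where $R=\Z[\pi_1(\R^3-K)]$ and $\hat R=\Z[\pi_1(\Lambda_K)]$, carrying $m$ to the Pontryagin product. In particular the peripheral data is not extracted from ``the Reeb chord of minimal action'' as you suggest; $\lambda$ and $\mu$ are already part of the coefficient ring $\Z[\lambda^{\pm1},\mu^{\pm1}]$, and the $\hat R$-module structure on the triple is what, under $\Phi$, becomes the action of the peripheral subgroup on $R$.

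The genuine gap is the passage from the group ring to the group. What the isomorphism of triples hands you is $\Z[\pi_1(\R^3-K)]$ as a ring with a specified action of $\lambda,\mu$, not $\pi_1$ itself; an abstract ring isomorphism of integral group rings need not arise from a group isomorphism. The paper closes this using the fact that knot groups are left-orderable, so the units in $\Z[\pi_1(\R^3-K)]$ are exactly $\pm\pi_1(\R^3-K)$ and the group is recovered from its group ring. You do not address this step. Finally, the theorem invoked is Waldhausen's, not Gordon--Luecke: once one has an isomorphism of knot groups carrying meridian to meridian and longitude to longitude, Waldhausen promotes the resulting homotopy equivalence of (Haken, aspherical) complements to a homeomorphism respecting the boundary pattern, which yields the isotopy. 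Gordon--Luecke concerns unmarked complements and is not what is needed here.
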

A version of this theorem was first proved by Shende \cite{shende} using micro-local sheaves and was reproved using holomorphic disks in \cite{ENS}. We point out that the Legendrian conormal tori of any two knots are smoothly isotopic when considered as ordinary submanifolds of $ST^{\ast}\R^{3}$. Theorem \ref{t:complete} and its relations to string topology, Floer cohomology, and micro-local sheaves are discussed in Section \ref{Sec:complete}.

\subsection{Physical aspects of knot contact homology}\label{sec:physics}
We start from Witten's relation between Chern-Simons gauge theory and open topological string \cite{Witten:1992fb} together with Ooguri-Vafa's study of large $N$ duality for conormals of knots \cite{OV,Ooguri_Vafa_worldsheet}. Let $M$ be a closed 3-manifold. Witten identified the partition function of $U(N)$ Chern-Simons gauge theory on $M$ with the partition function of open topological string on $T^{\ast} M$ with $N$ branes on the Lagrangian zero-section $M$. In Chern-Simons theory, the $n$-colored HOMFLY-PT polynomial of a knot $K\subset M$ equals the expectation value of the holonomy around the knot of the $U(N)$-connection in the $n^{\rm th}$ symmetric representation. The generating function of $n$-colored HOMFLY-PT polynomials correspond on the string side to the partition function of open string theory in $T^{\ast}M$ with $N$ branes on $M$ and one brane on the conormal $L_{K}$ of the knot. 

For $M=S^{3}$, large $N$ duality says that the open string in $T^{\ast} S^{3}$ with $N$-branes on $S^{3}$ is equivalent to the closed string, or Gromov-Witten theory, in the non-compact Calabi-Yau manifold $X$ which is the total space of the bundle $\mathcal{O}(-1)^{\oplus 2}\to\C P^{1}$ (the resolved conifold), provided $\area(\C P^{1})=N g_{s}$, where $g_{s}$ is the string coupling, or genus, parameter.  As smooth manifolds, $X-\C P^{1}$ and $T^{\ast} S^{3}- S^{3}$ are diffeomorphic. As symplectic manifolds they are closely related, in particular both are asymptotic to $[0,\infty)\times ST^{\ast}S^{3}$ at infinity.

If $K\subset S^{3}$ is a knot then after a non-exact shift, see \cite{koshkin}, $L_{K}\subset T^{\ast} S^{3}-S^{3}$, and we can view $L_{K}$ as a Lagrangian submanifold in $X$. This leads to the following relation between the colored HOMFLY-PT polynomial and open topological string or open Gromov-Witten theory in $X$. Let $C_{\chi,r,n}$ be the count of (generalized) holomorphic curves in $X$ with boundary on $L_{K}$, of Euler characteristic $\chi$, in relative homology class $rt+ nx$, where $t$ is the class of $[\C P^{1}]\in H_{2}(X,L_{K})$ and $x\in H_{2}(X,L_{K})$ maps to the generator of $H_{1}(L_{K})$ under the connecting homomorphism. If  
\[ 
F_{K}(e^{x},g_{s},Q)=\sum_{n,r,\chi} C_{n,r,\chi} \,g_{s}^{-\chi}Q^{r}e^{nx},
\]
then
\[ 
\Psi_{K}(x):=e^{F_{K}(x)} = \sum H_{K,n}(q, Q) e^{nx}, \quad q=e^{g_{s}}, \; Q=q^{N},
\] 
where $H_{K,n}$ denotes the $n$-colored HOMFLY-PT polynomial of $K$.

The colored HOMFLY-PT polynomial is $q$-holonomic \cite{Garoufalidis}, which in our language can be expressed as follows. Let $e^{\hat x}$ denote the operator which is multiplication by $e^{x}$ and $e^{\hat p}=e^{g_{s}\frac{\partial}{\partial x}}$. Then there is a polynomial $\hat A_{K}=\hat A_{K}(e^{\hat x},e^{\hat p})$ such that $\hat A_{K}\Psi_{K}=0$. 

We view $Q$ as a parameter and think of it as fixed. Then from the short-wave asymptotic expansion of the wave function $\Psi_{K}$,
\[ 
\Psi_{K}(x)=e^{F_{K}}=\exp\left(g_{s}^{-1}W_{K}^{0}(x)+ W_{K}^{1}(x) + g_{s}^{j-1}W_{K}^{j}(x) +\dots\right),
\]
we find that $p=\frac{\partial W_{K}^{0}}{\partial x}$ parameterizes the algebraic curve $\{A_{K}(e^{x},e^{p})=0\}$, where the polynomial $A_{K}$ is the classical limit $g_{s}\to 0$ of the operator polynomial $\hat A_{K}$. In terms of Gromov-Witten theory, $W_{K}(x)=W_{K}^{0}(x)$ can be interpreted as the disk potential, the count of holomorhic disks ($\chi=1$ curves) in $X$ with boundary on $L_{K}$.

In \cite{AV} it was observed (in computed examples) that the polynomial $A_{K}$ agreed with the \emph{augmentation polynomial} $\Aug_{K}$ of knot contact homology. To describe that polynomial, we consider a version $\mathcal{A}_{K}$ of $CE(\Lambda_{K})$ with coefficients in the group algebra of the second relative homology $\C[H_{2}(ST^{\ast} S^{3},\Lambda_{K})]\approx \C[e^{\pm x},e^{\pm p},Q^{\pm 1}]$, where $x$ and $p$ map to the longitude and meridian generators of $H_{1}(\Lambda_{K})$, and $Q=e^{t}$ for $t=[ST_{p}^{\ast}S^{3}]$, the class of the fiber sphere. If $\C$ is considered as a dg-algebra in degree 0 then the \emph{augmentation variety} $V_{K}$ is the closure of the set in the space of coefficients where there is a chain map into $\C$:  
\[ 
V_{K}=\text{closure}\bigl(\bigl\{(e^{x},e^{p},Q)\colon \text{there exists a chain map } \epsilon\colon \mathcal{A}_{K}\to\C\bigr\}\bigr),
\]
and the \emph{augmentation polynomial} $\Aug_{K}$ is its defining polynomial. We have the following result that  connects knot contact homology and Gromov-Witten theory at the level of the disk.
\begin{thm}\label{t:diskpotential}
\cite[Theorem 6.6 and Remark 6.7]{AENV}	
If $W_{K}(x)$ is the Gromov-Witten disk potential of $L_{K}\subset X$ then $p=\frac{\partial W_{K}}{\partial x}$ parameterizes a branch of the augmentation variety $V_{K}$.
\end{thm}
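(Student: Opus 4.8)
The plan is to produce the branch directly from $L_{K}$, by realizing both the defining equations of $V_{K}$ and the disk potential as counts of holomorphic disks and matching them through SFT neck-stretching. I would begin from the standard mechanism by which a filling induces an augmentation: if $L$ were an \emph{exact} Lagrangian filling of $\Lambda_{K}$ in the Liouville completion of $X$ along $ST^{\ast}S^{3}$, then counting rigid $J$-holomorphic disks in $X$ with boundary on $L$ and a single positive puncture at a Reeb chord $c$ of $\Lambda_{K}$, each weighted by its relative homology class pushed into $\C[e^{\pm x},e^{\pm p},Q^{\pm 1}]$, defines an algebra map $\epsilon_{L}\colon\mathcal{A}_{K}\to\C$; the chain-map identity $\epsilon_{L}\circ\partial=0$ is read off from the boundary of the $1$-dimensional moduli spaces of such disks, whose only SFT degenerations split off an SFT building at the positive puncture and so reproduce $\partial$.

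The obstruction is precisely the non-exactness: after the shift of Koshkin, $L_{K}\subset X$ bounds holomorphic disks of positive symplectic area with no punctures, and these bubble off at the ends of the above moduli spaces, so the naive $\epsilon_{L_{K}}$ is not a chain map. The remedy, and the crux, is to incorporate a bounding cochain. The non-exact shifts of $L_{K}$ form a one-parameter family parameterized by the flux $x\in H^{1}(L_{K};\R)\cong\R$ conjugate to the longitude generator of $H_{1}(\Lambda_{K})$, complexified by holonomy to $e^{x}\in\C^{\ast}$; write $L_{K}(x)$ for the corresponding shift. The punctureless holomorphic disks on $L_{K}(x)$, counted with weights $Q^{r}e^{nx}$ according to their class $rt+nx\in H_{2}(X,L_{K}(x))$, are exactly the disks whose generating function is the Gromov--Witten disk potential $W_{K}(x)=W_{K}(x,Q)$. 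Attaching these punctureless disks as boundary insertions deforms $\epsilon_{L_{K}(x)}$ to an algebra map $\epsilon_{x}\colon\mathcal{A}_{K}\to\C$, and solvability of $\epsilon_{x}\circ\partial=0$ becomes the Maurer--Cartan equation for the bounding cochain. A neck-stretching argument along $ST^{\ast}S^{3}$ then identifies the genus-zero part of this equation: the punctured disks computing $\epsilon_{x}$ and the punctureless ones computing $W_{K}$ are related by breaking off an SFT building at $ST^{\ast}S^{3}$, and tracking relative homology classes through the stretch forces the coefficient value at which $\epsilon_{x}$ defines a chain map to satisfy $p=\dfrac{\partial W_{K}}{\partial x}$ (the derivative $\partial_{x}$ reflecting the boundary marked point produced at the neck).

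For each $x$ in the domain of $W_{K}$ the map $\epsilon_{x}$ is then a chain map at the coefficient value $\bigl(e^{x},\,e^{\partial W_{K}/\partial x},\,Q\bigr)$, which therefore lies in $V_{K}$ by definition; as $x$ varies these points trace a one-complex-dimensional subvariety for fixed $Q$, and since $\Aug_{K}$ is a polynomial in $e^{x},e^{p}$ for each fixed $Q$ this subvariety is a whole irreducible component, i.e.\ a branch. Hence $p=\partial W_{K}/\partial x$ parameterizes a branch of $V_{K}$. (Equivalently: on any curve in $(\C^{\ast})^{2}$ the form $dx\wedge dp$ vanishes, so $p\,dx$ is closed and each branch of $V_{K}\cap\{Q=\mathrm{const}\}$ is locally the graph $p=\partial W/\partial x$ of \emph{some} local potential $W$; the content of the theorem is that $W$ may be taken to be the disk potential, which is what the disk-count matching establishes.)

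The step I expect to be the main obstacle is analytic rather than formal: rigorously defining the generalized curve counts $C_{n,r,\chi}$, and in particular the disk potential $W_{K}$, for the non-exact and possibly non-embedded Lagrangian $L_{K}\subset X$ — which forces one to treat multiply-covered disks and boundary bubbling via virtual or obstruction-bundle perturbations together with an SFT compactness statement adapted to the non-exact shift — and, entangled with this, the sign- and weight-bookkeeping needed to guarantee that the Maurer--Cartan equation comes out as exactly $p=\partial W_{K}/\partial x$ rather than some reparameterization of it. Alternatively, a route through large-$N$ duality: from Garoufalidis' recursion $\hat A_{K}\Psi_{K}=0$ and the short-wave expansion $\Psi_{K}=\exp(g_{s}^{-1}W_{K}^{0}+\cdots)$ one gets $A_{K}(e^{x},e^{\partial_{x}W_{K}^{0}},Q)=0$ in the limit $g_{s}\to0$, together with $W_{K}^{0}=W_{K}$ by Ooguri--Vafa; but this closes only once $A_{K}$ is identified with $\Aug_{K}$, a statement of the same nature as the theorem itself.
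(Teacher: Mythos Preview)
Your outline matches the paper's argument in structure: start from the exact-filling augmentation, observe that in the non-exact case boundary bubbling of punctureless disks spoils the chain-map equation, and repair this with bounding chains so that the corrected count $\epsilon$ is a chain map precisely when $p=\partial W_{K}/\partial x$. So the strategy is the right one.

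Where your account is vaguer than the paper is the mechanism producing the identity $p=\partial W_{K}/\partial x$. You attribute it to ``neck-stretching along $ST^{\ast}S^{3}$'' and ``a boundary marked point produced at the neck''; the paper does not run an SFT stretch here. Instead it makes a specific geometric choice: for each rigid punctureless disk $u$ one fixes a \emph{non-compact} $2$-chain $\sigma_{u}\subset L_{K}$ with $\partial\sigma_{u}=\partial u$ and with boundary at infinity a curve in $\Lambda_{K}$ lying in the longitude class $x$ (a prescribed multiple of a fixed representative $\xi$). Generalized disks are trees of rigid disks joined along intersections $\partial u_{v}\cap\sigma_{u_{v'}}$, and $W_{K}$ is their generating function. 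The deformed augmentation $\epsilon$ counts punctured disks with such bounding-chain insertions. The point is that \emph{at infinity} every $\sigma_{u}$ is a multiple of the longitude $x$; a disk in the symplectization that wraps $m$ times around the meridian $p$ meets a bounding chain asymptotic to $n\cdot x$ in $nm$ points, and summing over all insertion patterns exponentiates to the substitution $e^{p}\mapsto e^{\partial W_{K}/\partial x}$. That combinatorics of asymptotic linking, not a neck-stretch degeneration, is what forces the specific form of the relation. Your Maurer--Cartan/flux language is compatible with this, but it does not by itself explain why the answer is $\partial_{x}W_{K}$ rather than some other functional of $W_{K}$; the paper's choice of bounding chains with prescribed asymptotics in the $x$-class is exactly what pins this down.

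Your last paragraph correctly identifies the analytic caveat (transversality for possibly multiply covered disks, well-definedness of the generalized counts); the paper treats this at the level of a sketch and flags it as not fully rigorous. The alternative route via $q$-holonomy and large-$N$ duality that you mention is, as you note, circular without an independent identification $A_{K}=\Aug_{K}$, and the paper does not use it for this theorem.
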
 
The augmentation polynomial $\Aug_{K}$ of a knot $K$ is obtained by elimination theory from explicit polynomial equations. Theorem \ref{t:diskpotential} thus leads to a rather effective indirect calculation of the Gromov-Witten disk potential. It is explained in Section \ref{Sec:aug+disk}.   

In Section \ref{Sec:SFT} we discuss the higher genus counterpart of Theorem \ref{t:diskpotential}. We sketch the construction of a higher genus generalization of knot contact homology that we call \emph{Legendrian SFT}. In this theory, the operators $e^{\hat x}$ and $e^{\hat p}$ have natural enumerative geometrical interpretations. Furthermore, in analogy with the calculation of the augmentation polynomial,  elimination theory in the non-commutative setting should give the operator polynomial $\qAug_{K}(e^{\hat x},e^{\hat p})$ such that $\qAug_{K}\Psi_{K}=0$, and thus determine the recursion relation for the colored HOMFLY-PT.

\begin{rmk}
Theorem \ref{t:diskpotential} and other results about open Gromov-Witten theory presented here should be considered established from the physics point of view. From a more strict mathematical perspective, they are not rigorously proved and should be considered as conjectures.  	
\end{rmk}	

\subsection*{Acknowledgements} I am much indebted to my coauthors, Aganagic, Cieliebak, Etnyre, Latchev, Lekili, Ng, Shende, Sullivan, and Vafa, of the papers on which this note is based.

\section{Knot contact homology and Chekanov-Eliashberg dg-algebras}
In this section we introduce Chekanov-Eliashberg dg-algebras in the cases we use them.

\subsection{Background notions}
Let $M$ be an orientable 3-manifold and consider the unit cotangent bundle $ST^{\ast}M$ with the contact 1-form $\alpha$ which is the restriction of the action form $pdq$. The hyperplane field $\xi=\ker(\alpha)$ is the contact structure determined by $\alpha$ and $d\alpha$ gives a symplectic form on $\xi$. The first Chern-class of $\xi$ vanishes, $c_{1}(\xi)=0$. 

Let $\Lambda\subset ST^{\ast} M$ be a Legendrian submanifold, $\alpha|_{\Lambda}=0$. Then the tangent spaces of $\Lambda$ are Lagrangian subspaces of $\xi$. Since $c_{1}(\xi)=0$ there is a Maslov class in $H^{1}(\Lambda;\Z)$ that measures the total rotation of $T\Lambda$ in $\xi$. Here we will consider only Legendrian submanifolds with vanishing Maslov class.

The \emph{Reeb vector field} $R$ of $\alpha$ is characterized by $d\alpha(\cdot,R)=0$ and $\alpha(R)=1$. Flow segments of $R$ that begin and end on $\Lambda$ are called \emph{Reeb chords}.
The Reeb flow on $ST^{\ast}M$ is the lift of the geodesic flow on $M$. Consequently, if $K\subset M$ is a knot (or any submanifold) then Reeb chords of $\Lambda_{K}$ correspond to geodesics connecting $K$ to itself and perpendicular to $K$ at its endpoints. 

\subsection{Coefficients in chains on the based loop space}\label{sec:loopspace}
Let $M=\R^{3}$, $K\subset \R^{3}$ be a knot and $p\in \R^{3}-K$ a point. Let $\Lambda_{0}=\Lambda_{p}$, $\Lambda_{1}=\Lambda_{K}$, and $\Lambda=\Lambda_{0}\cup\Lambda_{1}$. The algebra $CE(\Lambda)$ is generated by the Reeb chords of $\Lambda$ and homotopy classes of loops in $\Lambda$. 
We define the coefficient ring $\mathbf{k}_{\Lambda}$ as the algebra over $\C$ generated by idempotents $e_{j}$ corresponding to $\Lambda_{j}$ so that 
$e_{i}e_{j}=\delta_{ij}e_{i}$, $i,j\in\{0,1\}$,  
where $\delta_{ij}$ is the Kronecker delta. 

Note that $\Lambda_{0}$ is a sphere and $\Lambda_{1}$ is a torus. Fix generators $\lambda$ and $\mu$ of $\pi_{1}(\Lambda_{1})$ (corresponding to the longitude and the meridian of $K$) and think of them as generators of the group algebra $\C[\pi_{1}(\Lambda_{1})]\approx \C[\lambda^{\pm 1},\mu^{\pm 1}]$. We let $CE(\Lambda)$ be the algebra over $\mathbf{k}_{\Lambda}$ generated by Reeb chords $c$, and the homotopy classes $\lambda$ and $\mu$. The generators $\lambda$ and $\mu$ satisfy the relations in the group algebra and the following additional relations hold:
\begin{align*} 
ce_{j}=
\begin{cases}
c &\text{if $c$ starts on $\Lambda_{j}$},\\
0 &\text{otherwise},
\end{cases}
\quad & \quad
e_{k}c=
\begin{cases}
c &\text{if $c$ ends on $\Lambda_{k}$},\\
0 &\text{otherwise},
\end{cases}\\
e_{j}\lambda_{k}=\lambda_{k}e_{j}=\delta_{jk}\lambda_{k},\quad &\quad
e_{j}\mu_{k}=\mu_{k}e_{j}=\delta_{jk}\mu_{k}.
\end{align*}

The grading of $\lambda$ and $\mu$ is $|\lambda|=|\mu|=0$ and Reeb chords are graded by the Conley-Zehnder index, which in the case of knot contact homology equals the Morse index of the underlying binormal geodesic, see \cite{EENS}. We can thus think of elements of $CE(\Lambda)$ as finite linear combinations of composable monomials $\mathbf{c}$ of the form
\[ 
\mathbf{c}=\gamma_{0}c_{1}\gamma_{1}c_{2}\gamma_{2}\dots\gamma_{m-1}c_{m}\gamma_{m},
\] 
where $\gamma_{j}$ is a homotopy class of loops in $\Lambda$ and $c_{j+1}$ is a Reeb chord, and composable means that $c$ starts at the component of $\gamma_{j}$ and ends at the component of $\gamma_{j-1}$. We then have the decomposition
\[ 
CE(\Lambda)=\bigoplus_{i,j} CE(\Lambda)_{i,j},
\]
where $CE(\Lambda)_{i,j}$ is generated by monomials which start on $\Lambda_{j}$ and ends on $\Lambda_{i}$. The product of two monomials is given by concatenation if the result is composable and zero otherwise.

The differential is defined to be $0$ on $e_i$ and on elements of $\Z[\pi_1(\Lambda_1)]$
and is given by a holomorphic disk count on Reeb chord generators that we describe next. Fix a complex structure $J$ on the symplectization $\R\times ST^{\ast}\R^{3}$, with symplectic form $d(e^{t}\alpha)$, $t\in\R$, that is invariant under the $\R$-translation and maps $\xi$ to itself. If $c$ is a Reeb chord then $\R\times c$ is a holomorphic strip with boundary on the Lagrangian submanifold $\R\times\Lambda$. Fix a base point in each component of $\Lambda$ and fix for each Reeb chord endpoint a reference path connecting it to the base point. Consider a Reeb chord $a$ and a composable word $\mathbf{b}$ of homotopy classes and Reeb chords of the form 
\[ 
\mathbf{b}=\gamma_{0}b_{1}\gamma_{1}b_{2}\gamma_{2}\dots\gamma_{m-1}b_{m}\gamma_{m},
\]
where $\gamma_{0}$ lies in the component where $a$ ends and $\gamma_{m}$ in the component where $a$ starts. We let $\mathcal{M}(a;\mathbf{b})$ denote the moduli space of holomorphic disks 
\[ 
u\colon (D,\partial D)\to (\R\times ST^{\ast}\R^{3},\R\times\Lambda), \quad du + J\circ du\circ i=0,
\]
with one positive and $m$ negative boundary punctures, 
which are asymptotic to the Reeb chord strip $\R\times a$ at positive infinity at the positive puncture and to the Reeb chord strip $\R\times b_{j}$ at negative infinty at the $j^{\rm th}$ negative puncture and such that the closed off path between punctures $j$ and $j+1$ lies in homotopy class $\gamma_{j}$, where puncture $0$ and $m+1$ both refer to the positive puncture, see Figure \ref{fig:differential}. The dimension of the moduli space $\mathcal{M}(a;\mathbf{b})$ equals $|a|-|\mathbf{b}|$. 

We define
\begin{equation}\label{eq:differential} 
\partial a = \sum_{a-|\mathbf{b}|=1}|\mathcal{M}(a;\mathbf{b})|\mathbf{b},
\end{equation}
where $|\mathcal{M}(a;\mathbf{b})|$ denotes the algebraic number of $\R$-families of disks in $\mathcal{M}(a;\mathbf{b})$ and extend to monomials by Leibniz rule. For the count in \eqref{eq:differential} to make sense we need the solutions to be transversely cut out. Since disks with one positive puncture cannot be multiple covers, transversality is relatively straightforward. Furthermore, the sum is finite by the SFT version of Gromov compactness.

\begin{figure}[htp]
	\labellist
	\small\hair 2pt
	\pinlabel $b_{1}$ at 180 73
	\pinlabel $b_{2}$ at 333 73
	\pinlabel $b_{3}$ at 480 73
	\pinlabel $\gamma_{0}$ at 190 500
	\pinlabel $\gamma_{1}$ at 250 270
	\pinlabel $\gamma_{2}$ at 420 270
	\pinlabel $\gamma_{3}$ at 450 500
	\pinlabel $a$ at 330 775
	\endlabellist
	\centering
	\includegraphics[width=.3\linewidth]{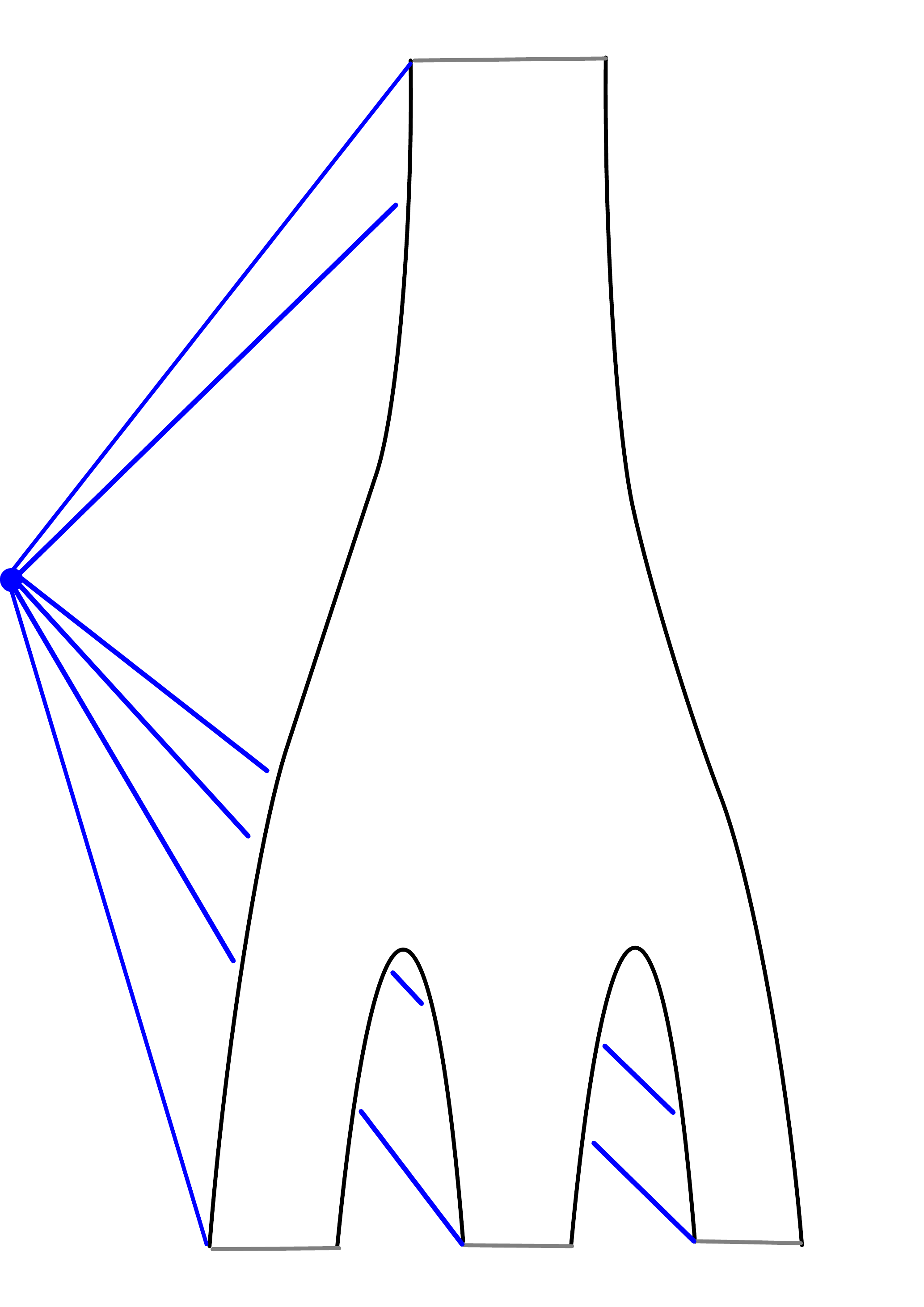}
	\caption{A disk contributing $\gamma_{0}b_{1}\gamma_{1}b_{2}\gamma_{2}b_{3}\gamma_{3}$ to $\partial a$.}
	\label{fig:differential}
\end{figure}

The basic result for Chekanov-Elisahberg algebras is then the following.

\begin{lma}\label{l:dgabasics}
The map $\partial$ is a differential, $\partial\circ\partial=0$ and the quasi-isomorphism class of $CE(\Lambda)$ is invariant under Legnedrian isotopies of $\Lambda$. Furthermore, the differential respects the decomposition $CE(\Lambda)=\bigoplus_{i,j} CE(\Lambda)_{i,j}$ which thus descends to homology.
\end{lma}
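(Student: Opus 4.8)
The three assertions follow from the standard package of Legendrian contact homology in the symplectization, adapted to the present coefficient ring; I outline the structure of the argument and refer to \cite{EENS} for the analytic details. I would begin with the decomposition, which is pure bookkeeping. A disk $u\in\mathcal{M}(a;\mathbf{b})$ has connected domain, hence connected boundary, mapped into $\R\times\Lambda$; reading the asymptotic markers cyclically along $\partial D$ starting from the positive puncture, the component of $\Lambda$ on which $a$ starts is the component containing $\gamma_{m}$ and the component on which $a$ ends is that of $\gamma_{0}$, which are exactly the composability constraints in the definition of $\mathcal{M}(a;\mathbf{b})$. Hence if $a\in CE(\Lambda)_{i,j}$ then every monomial occurring in $\partial a$ lies in $CE(\Lambda)_{i,j}$; since $\partial$ vanishes on $\mathbf{k}_{\Lambda}$ and on $\C[\pi_{1}(\Lambda_{1})]$ and is extended by the Leibniz rule, it preserves this bidegree on all of $CE(\Lambda)$, and the splitting therefore descends to homology.

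For $\partial\circ\partial=0$ the plan is the usual one. Fix $a$ and $\mathbf{b}$ with $|a|-|\mathbf{b}|=2$, so that $\mathcal{M}(a;\mathbf{b})$, modulo the $\R$-action, is a $1$-manifold; for generic $\R$-invariant $J$ it is transversely cut out, with no multiple-cover obstruction since a disk with a single positive puncture is somewhere injective. Because the symplectic form $d(e^{t}\alpha)$ is exact and $\R\times\Lambda$ is an exact Lagrangian, there are no closed holomorphic spheres and no puncture-free holomorphic disks, so under SFT compactness the only degeneration is splitting into a holomorphic building; a dimension and positive-puncture count then forces a two-level building, namely a disk with positive puncture at $a$ and a single negative puncture at some Reeb chord $c$, sitting above a disk with positive puncture at $c$ and negative punctures carrying the remaining Reeb chords, the loops $\gamma_{j}$ distributed along the two levels. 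A gluing theorem identifies each such building with precisely one boundary end of $\mathcal{M}(a;\mathbf{b})/\R$, compatibly with orientations. Since a compact $1$-manifold has signed boundary count zero, and that count is by definition the coefficient of $\mathbf{b}$ in $\partial\circ\partial\, a$, we get $\partial\circ\partial=0$ on generators, hence on all of $CE(\Lambda)$ by the Leibniz rule.

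For invariance I would use the exact Lagrangian cobordism (equivalently continuation and bifurcation) formalism. The trace of a Legendrian isotopy $\Lambda_{t}$, $t\in[0,1]$, is an exact Lagrangian cobordism in the symplectization from $\Lambda_{0}$ to $\Lambda_{1}$, and a count of rigid holomorphic disks in a cobordism-adapted almost complex structure yields a unital dg-algebra map $CE(\Lambda_{1})\to CE(\Lambda_{0})$ over $\mathbf{k}_{\Lambda}$; the reverse isotopy gives a map the other way, and parametrized moduli spaces produce chain homotopies between the two composites and the respective identities, so both maps are quasi-isomorphisms. One checks that the coefficient ring is respected: the isotopy can be chosen to carry each component within its own isotopy class, so it acts trivially on the idempotents $e_{j}$ and on $\C[\pi_{1}(\Lambda_{1})]$, and all the maps are linear over this ring. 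Alternatively, one discretizes the isotopy into elementary moves that are either chord-preserving, handled by a continuation-disk count giving a stable-tame isomorphism, or a birth or death of a Reeb chord pair, handled by an explicit algebraic stabilization, each of which is a quasi-isomorphism.

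The genuinely hard content is the analysis imported from the references: the SFT version of Gromov compactness together with the gluing theorem matching the ends of the $1$-dimensional moduli spaces to broken buildings, and, for invariance, the transversality and gluing in the non-$\R$-invariant cobordism setting. The only point specific to the present formulation is that the boundary homotopy-class data, the loops $\gamma_{j}$ and the idempotents $e_{j}$, must survive breaking and gluing; this will be automatic, since homotopy classes of boundary arcs are locally constant and hence preserved under all the limits and gluings above.
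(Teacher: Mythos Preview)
Your argument is correct and follows the same route as the paper's sketch: $\partial^{2}=0$ via SFT compactness and gluing identifying two-level buildings with the ends of an oriented 1-manifold, and invariance via rigid disks in the exact Lagrangian cobordism traced out by the isotopy (the paper cites \cite{EES} for the details). One small imprecision worth fixing: in the two-level building the top disk generally has several negative punctures, with trivial Reeb-chord strips in the lower level at all but one of them, rather than ``a single negative puncture at some Reeb chord $c$''; apart from this your outline is the paper's argument with the easy $(i,j)$-bookkeeping, which the paper leaves implicit, spelled out.
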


\begin{rmk}
For general contact manifolds, $CE(\Lambda)$ is an algebra over the so called orbit contact homology algebra. In the cases under study, $ST^{\ast}\R^{3}$ and $ST^{\ast}S^{3}$, the orbit contact homology algebra is trivial in degree $0$ and can be neglected. 
\end{rmk}

\begin{rmk}
For general Legendrian submanifolds $\Lambda$, the version of $CE(\Lambda)$ considered here is more complicated. The group ring generators for torus components are replaced by chains on the based loop space of the corresponding components and moduli spaces of all dimensions contribute to the differential, see \cite{EL}. 
\end{rmk}

\begin{proof}[Sketch of proof]
If $a$ is a Reeb chord then $\partial(\partial a)$ counts two level curves joined at Reeb chords. By gluing and SFT compactness such configurations constitute the boundary of an oriented 1-manifold and hence cancel algebraically. The invariance property can be proved in a similar way by looking at the boundary of the moduli space of holomorphic disks in Lagrangian cobordisms associated to Legendrian isotopies. See e.g.~\cite{EES} for details. 
\end{proof}

\subsection{Coefficients in relative homology}
Our second version of the Chekanov-Eliashberg dg-algebra of the conormal $\Lambda_{K}\subset ST^{\ast}S^{3}$ of a knot $K\subset S^{3}$ is denoted $\mathcal{A}_{K}$. The algebra $\mathcal{A}_{K}$ is generated by Reeb chords graded as before. Its coefficient ring is the group algebra $\C[H_{2}(ST^{\ast} S^{3},\Lambda_{K})]$ and group algebra elements commute with Reeb chords. To define the differential we fix for each Reeb chord a disk filling the reference paths. Capping off punctured disks in the moduli space $\mathcal{M}(a,\mathbf{b})$ with these disks we get a relative homology class and define the differential on Reeb chord generators of $\mathcal{A}_{K}$ as
\[ 
d a = \sum_{|a|-|\mathbf{c}|=1}|\mathcal{M}(a;\mathbf{c})|\mathbf{c}.
\]
Here $\mathbf{c}=e^{A}c_{1}\dots c_{m}$, where $c_{j}$ are the Reeb chords at the negative punctures of the disks in the moduli space and $A\in H_{2}(ST^{\ast}S^{3};\Lambda_{K})$ is the relative homology class of the capped off disks. That $d$ is a differential and the quasi-isomorphism invariance of $\mathcal{A}_{K}$ under Legendrian isotopies follow as before.

\subsection{Knot contact homology in basic examples}\label{sec:ex1}
We calculate the knot contact homology dg-algebras (in the lowest degrees) for the unknot and the trefoil knot. For general formulas we refer to \cite{EENS,EENStransverse}. The expressions give the differential in $\mathcal{A}_{K}$. for the differential in $CE(\Lambda_{K})$, set $Q=1$, $e^{x}=\lambda$, and $e^{p}=\mu$.

\subsubsection{The unknot}
Representing the unkot as a round circle in the plane we find that it has an $S^{1}$-Bott family of binormal geodesics and correspondingly an $S^{1}$-Bott family of Reeb chords. After small perturbation this gives two Reeb chords $c$ and $e$ of degrees $|c|=1$ and $|e|=2$. The differential can be computed using Morse flow trees, see \cite{E,EENS}. The result is
\begin{equation}\label{eq:unknotdiff}
de=0,\quad dc=1-e^{x}-e^{p}-Qe^{x}e^{p}.
\end{equation}

\subsubsection{The trefoil knot}\label{ssec:trefoildiff}
Represent the trefoil knot as a 2-strand braid around the unkot. If the trefoil $T$ lies sufficiently close to the unkot $U$, then its conormal torus $\Lambda_{T}$ lies in a small neighborhood $N(\Lambda_{U})$ of the unknot conormal, which can be identified with the neighborhood the zero section in its 1-jet space $J^{1}(\Lambda_{U})$. The projection $\Lambda_{T}\to\Lambda_{U}$ is a 2-fold cover and holomorphic disks with boundary on $\R\times\Lambda_{T}$ correspond to holomorphic disks on $\Lambda_{U}$ with flow trees attached, where the flow trees are determined by $\Lambda_{T}\subset J^{1}(\Lambda_{U})$, see \cite{EENS}. 
This leads to the following description of $\mathcal{A}_{T}$ in degrees $\le 1$. The Reeb chords are: 
\[ 
\text{degree 1: }b_{12},\, b_{21},\, c_{11},\, c_{12},\, c_{21},\, c_{22},\quad 
\text{degree 0: }a_{12},\, a_{21},
\]
with differentials
\begin{alignat*}{2}
&d c_{11} = e^{x}e^{p} - e^{x} -(2Q-e^{p})a_{12}-Qa_{12}^{2}a_{21},\quad 
&&d c_{12} = Q - e^{p} +e^{p} a_{12} + Q a_{12}a_{21},\\\notag
&d c_{21} = Q - e^{p} - e^{x}e^{p}a_{21} + Q a_{12}a_{21},\quad 
&&d c_{22}= e^{p} - 1 -Qa_{21}+e^{p} a_{12}a_{21},\\
&db_{12} = e^{-x}a_{12}-a_{21},\quad 
&&d b_{21}= a_{21}-e^{x}a_{12}.
\end{alignat*}

\section{A complete knot invariant}\label{Sec:complete}
In this section we discuss the completeness of knot contact homology as a knot invariant and describe its relations of to string topology, wrapped Floer cohomology, and micro-local sheaves. 

\subsection{Filtered quotients and a product}
We use notation as in Section \ref{sec:loopspace}, $\Lambda=\Lambda_{p}\cup\Lambda_{K}=\Lambda_{0}\cup\Lambda_{1}$, and consider $CE(\Lambda)$. The group ring $\Z[\pi_{1}(\Lambda_{K})]$ is a subalgebra of $CE(\Lambda)$ generated by the longitude and meridian generators $\lambda^{\pm 1}$ and $\mu^{\pm 1}$. Other generators are Reeb chords that correspond to binormal geodesics. If $\gamma$ is a geodesic we write $c$ for the corresponding Reeb chord. The grading of Reeb chords with endpoints on the same connected component is well-defined, while the grading for \emph{mixed chords} connecting distinct components are defined only up to an over all shift specified by a cetrain reference path connecting the two components. Let $\ind(\gamma)$ denote the Morse index of the geodesic $\gamma$.   

\begin{lma}\label{l:grading}\cite[Proposition 2.3]{ENS}
There is a choice of reference path so that the grading in $CE(\Lambda)$ of a Reeb chord $c$ corresponding to the geodesic $\gamma$ is as follows:
if $c$ connects $\Lambda_{K}$ to $\Lambda_{K}$ or $\Lambda_{p}$ to $\Lambda_{K}$ then
$|c|=\ind(\gamma)$, and if $c$ connects $\Lambda_{K}$ to $\Lambda_{p}$ then
$|c|=\ind(\gamma)+1$. 
\end{lma}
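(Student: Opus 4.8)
The plan is to compute the grading of a mixed Reeb chord by relating the Conley--Zehnder index of the chord to the Morse index of the underlying binormal geodesic, keeping careful track of the ambiguity coming from the choice of reference path between $\Lambda_p$ and $\Lambda_K$. Recall from Section~\ref{sec:loopspace} that for chords with endpoints on a single component the grading is the Conley--Zehnder index, which by \cite{EENS} equals the Morse index $\ind(\gamma)$ of the binormal geodesic; so the statement for chords connecting $\Lambda_K$ to $\Lambda_K$ is already in hand and requires nothing new. The content is entirely in the two \emph{mixed} cases, and since the grading there is only defined up to an overall shift, the real task is to pin down that shift by exhibiting one explicit reference path and computing with it.

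First I would set up the index computation at the linearized level: along a binormal geodesic $\gamma$ from $K$ (or $p$) to $K$, trivialize the contact hyperplane field $\xi$ along the Reeb chord strip and express the Conley--Zehnder index as a sum of a Morse-theoretic contribution (the index of the geodesic, via the standard correspondence between the Hessian of the energy functional on the path space and the crossing form of the linearized Reeb flow, as in \cite{EENS}) plus a boundary contribution from the Lagrangian tangent planes $T\Lambda_p$ and $T\Lambda_K$ at the two endpoints. The point is that $\Lambda_p$ is a cotangent fiber sphere while $\Lambda_K$ is a conormal torus, so the Maslov-type boundary terms at a $p$-end and at a $K$-end differ; this asymmetry is exactly what produces the $+1$ in the $\Lambda_K \to \Lambda_p$ case and not in the $\Lambda_p \to \Lambda_K$ case. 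Concretely I would compare the two mixed chord types by noting that reversing a geodesic interchanges its two endpoints while preserving its Morse index, and track how the index formula changes under this reversal; the discrepancy must be absorbed into the reference-path shift, and choosing the reference path so that it vanishes in one direction forces the value $+1$ in the other.

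The step I expect to be the main obstacle is the bookkeeping of the reference path and the orientation/trivialization conventions: one must choose a path in $ST^*\R^3$ connecting a point of $\Lambda_p$ to a point of $\Lambda_K$, extend the chosen trivialization of $\xi$ over it, and verify that with this choice the capping contribution at a $p$-endpoint is normalized to zero, so that the $\ind(\gamma)$ and $\ind(\gamma)+1$ values come out as stated rather than, say, $\ind(\gamma)\pm c$ for some other constant $c$. This is a finite but delicate computation; since $\Lambda_p \cong S^2$ is simply connected the choice of reference path to it is essentially unique up to homotopy, which is what makes the normalization well defined, and I would lean on the explicit model computations for the unknot conormal union a fiber (as in Section~\ref{sec:ex1} and \cite{EENS,ENS}) as a consistency check on the sign. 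Once the normalization is fixed, extending from the model to an arbitrary knot $K$ is immediate, because the grading of a Reeb chord depends only on the homotopy class of the capped Reeb chord path and the local data at the endpoints, neither of which is sensitive to the global knot type.
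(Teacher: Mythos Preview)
The paper does not prove this lemma; it simply cites \cite[Proposition~2.3]{ENS} and moves on. So there is no in-text argument to compare against, and your outline should be judged on its own terms.

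Your strategy is sound. You correctly isolate the content: the $KK$ case is the standard Conley--Zehnder $=$ Morse index identification from \cite{EENS}, and the mixed cases reduce to computing one integer, namely the reference-path-independent quantity $|c|+|\bar c|-2\ind(\gamma)$ for a pair of oppositely oriented mixed chords over the same geodesic. Once that integer is known to be $1$, normalizing the reference path so that the $\Lambda_p\to\Lambda_K$ shift is $0$ forces the $\Lambda_K\to\Lambda_p$ shift to be $+1$, exactly as you say. Checking the invariant on a single explicit geodesic (e.g.\ the straight segment from $p$ to a nearby round unknot) is a legitimate way to pin it down.

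Two small points to tighten. First, your remark that ``$\Lambda_p\cong S^2$ is simply connected so the reference path is essentially unique up to homotopy'' is not quite the right justification: the reference path lives in $ST^{\ast}\R^3$, not in $\Lambda_p$, and what actually controls the ambiguity is the Maslov index of loops of Lagrangian tangent planes along closed paths there. The relevant facts are that $ST^{\ast}\R^3\simeq\R^3\times S^2$ is simply connected and both $\Lambda_p$ and $\Lambda_K$ have vanishing Maslov class, so the residual freedom is exactly the single integer shift you describe. Second, you attribute the asymmetry to ``sphere versus torus'', but the topological type of the Legendrian is not what produces the $+1$; what matters is the comparison of the Lagrangian tangent planes $T\Lambda_p$ and $T\Lambda_K$ at the endpoints relative to the linearized Reeb flow, and this is a local (not global topological) computation. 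Making that local computation explicit---rather than only invoking a model check---would turn your outline into a complete proof.
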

Consider the filtration on $CE(\Lambda)$ by the number of mixed Reeb chords, and the corresponding filtered quotients:
\begin{align*}
&CE(\Lambda)_{1,1}=\mathcal{F}^{0}_{11}\supset \mathcal{F}^{2}_{11}\supset \mathcal{F}^{4}_{11}\supset\dots,\quad CE_{11}^{(2k)}=\mathcal{F}_{11}^{2k}/\mathcal{F}_{11}^{2k+2},\\
&CE(\Lambda)_{i,j}=\mathcal{F}^{1}_{ij}\supset \mathcal{F}^{3}_{ij}\supset \mathcal{F}^{5}_{ij}\supset\dots,\quad CE_{ij}^{(2k+1)}=\mathcal{F}_{ij}^{2k+1}/\mathcal{F}_{ij}^{2k+3},\text{ for }i\ne j,
\end{align*}
where $\mathcal{F}^{r}$ denotes the subalgebra generated by monomials with at least $r$ mixed Reeb chords. The differential respects this filtration. Lemma \ref{l:grading} shows that $CE(\Lambda)$ is supported in non-negative degrees and that monomials of lowest degree $d(i,j)\in\{0,1\}$ in $CE(\Lambda)_{i,j}$ contain the minimal possible number $s(i,j)\in\{0,1\}$ of mixed Reeb chords. We then find that  
$H_{d(i,j)}\left(CE(\Lambda)_{i,j}\right)=H_{d(i,j)}(CE_{ij}^{(s(i,j))})$.
We call
\[ 
(R_{Kp},R_{pK},R_{KK}):=
\left(H_{0}(CE(\Lambda)_{10}),H_{1}(CE(\Lambda)_{01}),H_{0}(CE(\Lambda)_{1,1})\right)
\] 
the \emph{knot contact homology triple} of $K$. The concatenation product in $CE(\Lambda)$ turns $R_{Kp}$ and $R_{pK}$ into left and right modules, respectively, and $R_{KK}$ into a left-right module over $\Z[\lambda^{\pm 1},\mu^{\pm 1}]$.

We next consider a product for the knot contact homology triple that is closely related to the product in wrapped Floer cohomology. As the differential, it is defined in terms of moduli spaces of holomorphic disk but for the product there are two positive punctures rather than one. 

Let $a$ and $b$ be Reeb chords connecting $\Lambda_{p}$ to $\Lambda_{K}$ and vice versa. Let $\mathbf{c}$ be a monomial in $CE(\Lambda_{K})$. Define $\mathcal{M}(a,b;\mathbf{c})$ as the moduli space of holomorphic disks $u\colon D\to \R\times T^{\ast}\R^{3}$ with two positive punctures asymptotic to $a$ and $b$, such that the boundary arc between them maps to $\R\times\Lambda_{p}$, and such that the remaining punctured arc in the boundary maps to $\Lambda_{K}$ with homotopy class and negative punctures according to $\mathbf{c}$. We then have
\[ 
\dim(\mathcal{M}(a,b;\mathbf{c}))=|a|+|b|-|\mathbf{c}|.
\]
Define
\[ 
m'(a,b)=\sum_{|\mathbf{c}|=|a|+|b|-1}|\mathcal{M}(a,b;\mathbf{c})|\mathbf{c}
\]  
and use this to define the chain level product 
$
m\colon CE(\Lambda)_{10}^{(1)}\otimes CE(\Lambda)_{01}^{(1)} \to CE(\Lambda)_{11}^{(0)}
$
as 
$ 
m(\mathbf{a}a,b\mathbf{b})=\mathbf{a}m'(a,b)\mathbf{b}.
$
\begin{prp}\cite[Proposition 2.13]{ENS}
The product $m$ descends to homology and gives a product $m\colon R_{Kp}\otimes R_{pK}\to R_{KK}$. The knot contact homology triple as modules over $\Z[\pi_{1}(\Lambda_{K})]$ and with the product $m$ is invariant under Legendrian isotopy.
\end{prp}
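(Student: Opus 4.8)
The plan is to follow the standard Chekanov--Eliashberg paradigm: first establish that $m$ is a chain map with respect to the differentials on the relevant filtered quotients, so that it descends to homology, and then prove invariance by constructing a continuation-type chain homotopy from a Legendrian isotopy. For the first part, I would analyze the boundary of the one-dimensional components of the moduli spaces $\mathcal{M}(a,b;\mathbf{c})$ with $|a|+|b|-|\mathbf{c}|=1$. By SFT compactness together with gluing, the ends of such a moduli space are of three types: (i) a disk with two positive punctures breaking off a disk with one positive puncture at a negative Reeb chord --- these give the terms $m'(\partial a, b)$, $m'(a,\partial b)$ on one side and the contributions to $\partial(m'(a,b))$ on the other; (ii) breaking at the mixed chord on the boundary arc between the two positive punctures, which would produce a configuration with a bigon on $\R\times\Lambda_{p}$, but such a bigon must have a positive puncture and there is none available, so this degeneration does not occur (here one uses that $\Lambda_{p}$ is a sphere with no Reeb chords to itself of the relevant index, or more precisely that the filtration by mixed chords is respected); (iii) index-zero disks with one positive puncture on $\Lambda_{p}$--$\Lambda_{p}$ or similar, which are excluded by the grading constraints of Lemma~\ref{l:grading} and the fact that we work in the filtered quotients $CE_{10}^{(1)}$, $CE_{01}^{(1)}$, $CE_{11}^{(0)}$. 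Matching these boundary contributions gives the Leibniz-type identity $\partial m'(a,b) = m'(\partial a, b) \pm m'(a, \partial b)$ on the filtered quotients, hence $m$ descends to a product $R_{Kp}\otimes R_{pK}\to R_{KK}$; the $\Z[\pi_1(\Lambda_K)]$-bilinearity is immediate from the definition $m(\mathbf{a}a, b\mathbf{b}) = \mathbf{a}m'(a,b)\mathbf{b}$ since the loop classes $\gamma_j$ are simply concatenated.

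For the invariance statement, I would take a Legendrian isotopy $\Lambda_t$, $t\in[0,1]$, from $\Lambda$ to $\Lambda'$, and use the associated exact Lagrangian cobordism $L$ in the symplectization (a trace cobordism, concatenated with trivial cylinders). Counting rigid holomorphic disks in $\R\times ST^{\ast}\R^{3}$ with boundary on $L$ --- with one positive puncture --- gives the DGA morphism $\Phi\colon CE(\Lambda)\to CE(\Lambda')$ inducing the quasi-isomorphism of Lemma~\ref{l:dgabasics}, and this $\Phi$ respects the mixed-chord filtration, hence induces maps on all the filtered quotients and in particular on the triples. To see that $\Phi$ intertwines the products $m$ and $m'$ up to chain homotopy, I would count rigid disks with boundary on the cobordism $L$ having \emph{two} positive punctures asymptotic to mixed chords of $\Lambda'$, with the arc between them on $\R\times\Lambda_p$; the boundary of the one-dimensional such moduli spaces yields the identity relating $\Phi\circ m'$ and $m'_{\Lambda'}\circ(\Phi\otimes\Phi)$ modulo terms involving a chain homotopy $\partial H \pm H\partial$. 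On homology this gives $\Phi_*(m(\alpha\otimes\beta)) = m(\Phi_*\alpha\otimes\Phi_*\beta)$, so the triple with product $m$ is an invariant, and since $\Phi_*$ is an isomorphism on the relevant homology groups by the standard argument, one gets an isomorphism of triples-with-product. Compatibility with the $\Z[\pi_1(\Lambda_K)]$-module structure holds because $\Phi$ is the identity on the loop-class generators (the isotopy can be taken to fix the $\Lambda_K$ component's fundamental group canonically).

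The main obstacle is the bookkeeping around degeneration type (ii): one must rule out boundary ends of $\mathcal{M}(a,b;\mathbf{c})$ in which the boundary arc on $\R\times\Lambda_p$ breaks, since a priori such a breaking could produce a disk with boundary on $\R\times\Lambda_p$ carrying one of the two positive punctures together with a new negative puncture at a $\Lambda_p$--$\Lambda_p$ Reeb chord. This is controlled precisely by the grading Lemma~\ref{l:grading} and by working in the filtered quotients: a $\Lambda_p$--$\Lambda_p$ chord contributes index at least $2$ (it lives on the sphere conormal, whose binormal chords to itself have Morse index $\geq 2$ in the relevant range), so the would-be broken configuration has the wrong dimension and cannot appear at the boundary of a $1$-dimensional moduli space; moreover any disk with more than one mixed chord is zero in the filtered quotient. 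Once this dimension/filtration argument is in place, the rest is the routine --- though technically substantial --- SFT compactness-and-gluing package, for which I would cite \cite{EES, ENS} for the analytic details.
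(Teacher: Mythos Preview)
The paper does not actually prove this proposition; it is stated with a bare citation to \cite[Proposition 2.13]{ENS} and no argument is given here. Your outline is the standard SFT argument and is essentially what one expects the proof in \cite{ENS} to be: a Leibniz identity from the boundary of one higher-dimensional two-positive-puncture moduli spaces, and invariance via cobordism maps that intertwine the products up to chain homotopy.

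Two small corrections are worth making. First, a bookkeeping slip: the moduli spaces whose ends yield the Leibniz relation are those with $|a|+|b|-|\mathbf{c}|=2$ (one-dimensional after dividing by the $\R$-translation), not $|a|+|b|-|\mathbf{c}|=1$, which is the rigid case used to \emph{define} $m'$. Second, your treatment of degeneration type~(ii) is more elaborate than necessary. In $ST^{\ast}\R^{3}$ the Reeb chords of $\Lambda_{p}$ correspond to geodesic loops in $\R^{3}$ based at $p$, and there are none; so there are simply no $\Lambda_{p}$--$\Lambda_{p}$ chords at which the $\Lambda_{p}$-arc could break, and no grading estimate is needed. (Disk bubbling on the $\Lambda_{p}$-arc is likewise excluded by exactness: a nonconstant disk in the symplectization must carry a positive puncture.) With these adjustments your sketch is sound.
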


\subsection{String topology and the cord algebra}
In this section we define a topological model for knot contact homology in low degrees that one can think of as the string topology of a certain singular space. Our treatment will be brief and we refer to \cite{CELN,ENS} for full details.

Let $K\subset \R^{3}$ be a knot and $p\in \R^{3}-K$ a point with Lagrangian conormals $L_{K}$ and $L_{p}$. Let $\Sigma$ be the union $\Sigma=\R^{3}\cup L_{K}\cup L_{p}\subset T^{\ast} \R^{3}$. Pick an almost complex structure $J$ compatible with the metric along the zero section. Fix base points $x_{K}\in L_{K}-\R^{3}$ and $x_{p}\in L_{p}-\R^{3}$. 

We consider broken strings which are paths $s\colon [a,b]\to \Sigma$ that connect base points, $c(a), c(b)\in \{x_{p},x_{K}\}$ and that admit a subdivision $a< t_{1}<\dots< t_{m}< b$ such that $s|_{[t_{i},t_{i+1}]}$ is a $C^{k}$-map into one of the irreducible components of $\Sigma$ and such that the left and right derivatives at switches (i.e., points where $c$ switches irreducible components) are related by $\dot c(t_{j}-) = J\dot c(t_{j}+)$.

For $\ell>0$, let $\Sigma_{\ell}$ denote the space of strings with $\ell$ switches at $p$ and with the $C^{k}$-topology for some $k>0$. Write $\Sigma_{\ell}=\Sigma_{\ell}^{KK}\cup \Sigma_{\ell}^{Kp}\cup\Sigma_{\ell}^{pK}\cup\Sigma_{\ell}^{pp}$,
where $\Sigma_{\ell}^{KK}$ denotes strings that start and end at $x_{K}$, etc. For $d>0$, let 
\[ 
C_{d}(\Sigma_{\ell})=C_{d}(\Sigma_{\ell}^{KK})\oplus C_{d}(\Sigma_{\ell}^{Kp}) \oplus C_{d}(\Sigma_{\ell}^{pK})\oplus C_{d}(\Sigma_{\ell}^{pp})
\]
denote singular $d$-chains of $\Sigma_{\ell}$ in general position with respect to $K$. 
We introduce two string topology operations associated to $K$,
$
\delta_{K}^{Q},\delta_{K}^{N}\colon C_{k}(\Sigma_{\ell})\to C_{k-1}(\Sigma_{\ell+1}).
$
If $\sigma$ is a generic $d$-simplex then $\delta^{Q}_{K}(\sigma)$ is the chain parameterized by the locus in $\sigma$ of strings with components in $S^{3}$ that intersect $K$ at interior points. The operation splits the curve at such intersection and inserts a spike in $L_{K}$, see \cite{CELN}. The operation $\delta_{K}^{Q}$ is defined similarly exchanging the role of $\R^{3}$ and $L_{K}$. There are also similar operations $\delta_{p}^{Q},\delta_{p}^{N}\colon C_{k}(\Sigma_{\ell})\to C_{k-2}(\Sigma_{\ell+1})$ at $p$ that will play less of a role here.

Let $\partial$ denote the singular differential on $C_{\ast}(\Sigma_{\ell})$ and let
$C_{m}=\bigoplus_{k+\ell/2=m} C_{k}(\Sigma_{\ell})$.
We introduce a Pontryagin product which concatenates strings at $p$. We write $R^{\rm st}_{KK}$, $R^{\rm st}_{Kp}$, and $R^{\rm st}_{pK}$ for the degree 0 homology of the corresponding summands of $C_{\ast}$.

\begin{prp}\label{prp:iso1}\cite{CELN,ENS}
The map $d=\partial + \delta_{K}^{Q} + \delta_K^{N} + \delta_{p}^{Q} +\delta_{p}^{N}$ is a differential on $C_{\ast}$. The homology of $d$ in degree 0 is the cokernel of $\partial+ \delta_{K}^{Q} + \delta_K^{N}\colon C_{1}\to C_{0}$ (where $\delta_{p}^{Q}$ and $\delta_{p}^{N}$ vanishes for degree reasons) and is as follows:
\[
 R^{\rm st}_{KK}\approx \hat R + R(1-\mu),\quad
 R^{\rm st}_{Kp}\approx R,\quad
 R^{\rm st}_{pK}\approx R(1-\mu),
\]
where $R=\Z[\pi_{1}(\R^{3}-K)]$ and $\hat R=\Z[\pi_{1}(\Lambda_{K})]$.
\end{prp}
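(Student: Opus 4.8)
The plan is to compute the degree-$0$ homology of the string complex directly by identifying cycles and boundaries, using the combinatorial structure of broken strings in low degrees. First I would analyze the low-degree strata: a broken string with $\ell$ switches at $p$ contributes to degree $m = k + \ell/2$, where $k$ is the singular-chain dimension, so degree-$0$ classes live in $C_0(\Sigma_0)$, i.e.\ constant paths (points) in strings with no switch at $p$. For the $KK$, $Kp$, $pK$, $pp$ summands these are, respectively, homotopy classes of based loops in $\Sigma$ starting and ending at $x_K$ (resp.\ $x_K$ to $x_p$, etc.) with an even/odd number of switches between $\R^3$ and $L_K$. The key observation is that a string component in $L_K \simeq S^1 \times \R^2$ retracts onto its $\lambda$-class, and a string component in $\R^3$ that does not meet $K$ is contractible rel endpoints in $\R^3 - K$; so $\pi_0$ of the degree-$0$ part is a quotient of words in $\pi_1(\R^3 - K)$ with markers recording switches, i.e.\ precisely the cord-algebra generators.

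Next I would carry out the cokernel computation for $d\colon C_1 \to C_0$. In degree $0$ the only contributions to $d$ on degree-$1$ chains are $\partial$ (ordinary homotopies of broken strings, giving the relations that turn based-loop classes into $\pi_1(\R^3-K)$) and $\delta_K^Q + \delta_K^N$, which by construction insert or remove a switch at $K$ together with a $\lambda$-spike in $L_K$; the operations $\delta_p^Q, \delta_p^N$ drop the degree by $2$ and hence vanish on the relevant chains. The $\partial$-relations identify $R^{\rm st}_{Kp}$ with $R = \Z[\pi_1(\R^3-K)]$ essentially tautologically, since a $Kp$-string with no switch at $K$ is just a path, and its homotopy class rel basepoints is a torsor over $\pi_1(\R^3-K)$ once a reference path is fixed. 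For $R^{\rm st}_{pK}$ the extra relation coming from $\delta_K^N$ applied to a one-parameter family that drags the $L_K$-endpoint once around the $S^1$ factor of $L_K$ produces the relation that kills the difference between a class and its $\mu$-translate on the $L_K$ side, yielding $R(1-\mu)$; here one must check carefully how $\mu$ (the meridian of $K$, equivalently the boundary of a disk in $L_K$) acts, which is exactly the content of choosing the generators $\lambda,\mu$ as in Section \ref{sec:loopspace}. For $R^{\rm st}_{KK}$ one gets contributions both from closed-up $\R^3$-loops (giving $\hat R = \Z[\pi_1(\Lambda_K)]$, since such a string can be pushed entirely into $L_K$) and from strings with two switches at $K$ (giving the ideal $R(1-\mu)$), and the two pieces glue along the overlap to give $\hat R + R(1-\mu)$ as stated.

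I would then verify that $d$ squares to zero, which reduces to checking that the $\delta$-operations anticommute appropriately and that $\partial$ commutes with each $\delta$ up to sign; this is where one invokes the transversality/general-position hypotheses on the chains and the $C^k$-topology so that intersections with $K$ are clean and the spike-insertions are well defined. The expository route here is to cite \cite{CELN,ENS} for the full chain-level verification and to emphasize only the degree-$0$ bookkeeping, which is combinatorial.

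The main obstacle I expect is the careful identification of the boundary relations in the $pK$ and $KK$ sectors: making precise how a $1$-parameter family of broken strings in which a switch point at $K$ is created or annihilated, or in which an endpoint traverses the $S^1$ of $L_K$, contributes to $\partial + \delta_K^Q + \delta_K^N$, and checking that these produce exactly the submodule $R(1-\mu)$ and not something larger or smaller. This requires being scrupulous about orientations, about the reference paths used to rigidify the based-loop classes into honest elements of the group ring, and about the fact that $\delta_K^Q$ and $\delta_K^N$ insert spikes on \emph{opposite} sides (one in $L_K$, one in $\R^3$) so that their contributions to $d^2 = 0$ cancel rather than reinforce. Once that bookkeeping is pinned down, the stated isomorphisms follow by presenting each degree-$0$ homology group by generators (cords) and the relations just enumerated, and recognizing the result as the cord algebra of $K$.
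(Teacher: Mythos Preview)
The paper itself does not prove this proposition; it is stated with citation to \cite{CELN,ENS} and no argument is given in the text. Your overall strategy --- analyze degree-$0$ chains as homotopy classes of broken strings, impose the relations coming from $\partial$ and the $\delta_K$ operations, and recognize the result as the cord algebra --- is indeed the approach taken in those references, so at that level of description you are on the right track.

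However, there is a genuine confusion in your sketch. For $R^{\rm st}_{pK}$ you say the extra relation ``kills the difference between a class and its $\mu$-translate on the $L_K$ side, yielding $R(1-\mu)$''. Killing $r - r\mu$ for all $r$ produces the \emph{quotient} $R/R(1-\mu)$, not the ideal $R(1-\mu)$; and since $\mu$ normally generates $\pi_1(\R^3-K)$, that quotient is just $\Z$, which is not what is claimed. The notation $R(1-\mu)$ in the statement denotes the right ideal of $R$ generated by $1-\mu$, and the isomorphism identifies $R^{\rm st}_{pK}$ with that submodule. The mechanism is roughly the opposite of what you describe: under the map sending a $pK$ broken string to the element of $R$ obtained by closing it up with reference paths, the $\delta_K$ relation at the $K$-end forces the image to land in the ideal $(1-\mu)$, not to be quotiented by it. A related imprecision appears in your treatment of $R^{\rm st}_{KK}$: you say $\hat R = \Z[\pi_1(\Lambda_K)]$ arises from loops that ``can be pushed entirely into $L_K$'', but $\pi_1(L_K)\cong\Z$ while $\pi_1(\Lambda_K)\cong\Z^2$, so loops in $L_K$ alone cannot produce both $\lambda$ and $\mu$. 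The copy of $\hat R$ is rather the $\hat R$-submodule generated by the class of the constant (empty-cord) string, with $\mu$ acting through the skein/$\delta_K$ relations, not through a loop in $L_K$. Getting these identifications right is exactly the ``careful bookkeeping'' you flag as an obstacle, and as written your proposal has the direction of the key relation backwards.
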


We next consider a geometric chain map of algebras $\Phi\colon CE(\Lambda)\to C_{\ast}$, where the multiplication on $C_{\ast}$ is given by chain level concatenation of broken strings. The map is defined as follows on generators. If $a$ is a Reeb chord let $\mathcal{M}(a;\Sigma)$ denote the moduli space of holomorphic disks in $T^{\ast}S^{3}$ with boundary on $\Sigma$ and Lagrangian intersection punctures at $K$. The evaluation map gives a chain of broken strings for each $u\in\mathcal{M}(a;\Sigma)$. Let $[\mathcal{M}(a;\Sigma)]$ denote the chain of broken strings carried by the moduli space and define
$\Phi(a)=[\mathcal{M}(a;\Sigma)]$.

\begin{prp}\label{prp:iso2}\cite{ENS}
The map $\Phi$ is a chain map. It induces an isomorphism 
\[ 
\left(R_{Kp},R_{pK},R_{KK}\right)\to\left(R_{Kp}^{\rm st},R_{pK}^{\rm st},R_{KK}^{\rm st}\right)
\]
that intertwines the product $m$ and the Pontryagin product at $p$.
\end{prp}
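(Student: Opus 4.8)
The plan is to establish Proposition~\ref{prp:iso2} in two stages: first verify that $\Phi$ is a chain map, then verify that the induced map on degree-zero homology triples is an isomorphism intertwining the two products. For the chain map property, I would analyze the boundary of the one-dimensional moduli spaces $\mathcal{M}(a;\Sigma)$ for a Reeb chord $a$ with $|a|=1$ (more generally, the codimension-one strata of $\mathcal{M}(a;\Sigma)$). By SFT compactness adapted to the split Lagrangian $\Sigma=\R^3\cup L_K\cup L_p$, the boundary consists of: (i) curves where the holomorphic disk breaks at a Reeb chord into a rigid disk of $CE(\Lambda)$ over a lower-dimensional moduli space — this produces the term $[\mathcal{M}(\partial a;\Sigma)]=\Phi(\partial a)$; (ii) configurations where a Lagrangian intersection puncture at $K$ degenerates, which is exactly the geometric content of the string operations $\delta^Q_K,\delta^N_K$ (and the codimension-two analogues $\delta^Q_p,\delta^N_p$ at $p$) applied to the chain of broken strings; and (iii) the ordinary singular boundary $\partial[\mathcal{M}(a;\Sigma)]$ coming from the boundary of the parameter simplex. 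Matching (i)+(ii)+(iii) with $d\circ\Phi=\Phi\circ\partial$, where $d=\partial+\delta_K^Q+\delta_K^N+\delta_p^Q+\delta_p^N$ as in Proposition~\ref{prp:iso1}, gives the chain map statement. The multiplicativity under concatenation at $p$ follows because gluing two holomorphic disks at a mixed Reeb chord on $\Lambda_p$ corresponds on the string side to concatenating the evaluated broken strings at $x_p$.

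For the isomorphism on degree-zero homology, I would combine this with the explicit identification of the right-hand side from Proposition~\ref{prp:iso1} and the known computation of the left-hand side: both $R_{KK}$, $R_{Kp}$, $R_{pK}$ and their string-topological counterparts are governed by the cord algebra / group ring $R=\Z[\pi_1(\R^3-K)]$ with the meridian action, by the low-degree formulas for $\mathcal{A}_K$ (and $CE(\Lambda)$ via $Q=1$). Concretely, one checks that $\Phi$ sends the degree-$0$ generators $a_{ij}$ (binormal Reeb chords) to broken strings representing the corresponding cords, and that the induced map on $H_0$ is precisely the cord-algebra isomorphism already established in \cite{EENS,CELN} between knot contact homology in low degree and the string model; the module structures over $\Z[\pi_1(\Lambda_K)]=\Z[\lambda^{\pm1},\mu^{\pm1}]$ match because homotopy classes of loops in $\Lambda_K$ act on both sides by the same concatenation. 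Surjectivity is clear from the generators, and injectivity follows from comparing the presentations: the relations in $H_0(CE(\Lambda)_{i,j})$ come from the differentials of degree-$1$ Reeb chords, and $\Phi$ of these relations are exactly the string-topology relations cutting out $\hat R+R(1-\mu)$, $R$, and $R(1-\mu)$.

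The main obstacle is step (ii) in the chain-map argument: identifying the boundary degeneration of a Lagrangian intersection puncture at $K$ with the string operation $\delta^Q_K+\delta^N_K$ at the level of chains of broken strings. This requires a careful transversality and gluing analysis at the singular locus of $\Sigma$, where two smooth Lagrangian sheets ($\R^3$ and $L_K$, or their $p$-analogues) meet along $K$; one must show that as a holomorphic disk develops a puncture hitting $K$, its limit is a broken string with an inserted spike of the precise type prescribed by $\delta^Q_K$ or $\delta^N_K$, and conversely that every such broken-string configuration arises as such a limit and glues back (a neck-stretching / Lagrangian-surgery-type argument). This compactness-and-gluing package is precisely the technical heart of \cite{CELN} and \cite{ENS}; in this survey I would invoke it rather than reprove it, indicating that the remaining verifications — orientation signs, the bookkeeping of the filtration degree $k+\ell/2$, and the degree count $\dim\mathcal{M}(a;\Sigma)=|a|$ — are routine consequences of the index formula.
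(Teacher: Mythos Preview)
The paper does not actually give a proof of this proposition: it is stated with the citation \cite{ENS} and then used as a black box in the proof of Theorem~\ref{t:complete}. Your sketch is a correct outline of the argument as carried out in \cite{CELN,ENS}, and you yourself recognize that the survey-level treatment here would invoke those references for the compactness-and-gluing core rather than redo it; so your proposal is both correct and appropriately calibrated to the paper's level of detail.
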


\begin{proof}[Proof of Theorem \ref{t:complete}]
Propositions \ref{prp:iso1} and \ref{prp:iso2} imply that the knot contact homology triple knows the group ring of the knot group and the action of $\lambda$ and $\mu$. Properties of left-orderable groups together with Waldhausen's theorem then give the result, see \cite{ENS} for details. 	
\end{proof}

\subsection{Partially wrapped Floer cohomology and Legendrian surgery}
The knot contact homology of the previous section can also be interpreted, via Legendrian surgery, in terms of partially wrapped Floer cohomology that in turn is connected to the micro-local sheaves used by Shende \cite{shende} to prove the completness result in Theorem \ref{t:complete}. We give a very brief discussion and refer to \cite[Section 6]{ENSarxiv} for more details. 

To a knot $K\subset \R^{3}$ we associate a Liouville sector $W_K$ with Lagrangian skeleton $L=\R^3\cup L_K$, this roughly means that $L$ is a Lagrangian subvariety and that $W_{K}$ is a regular neighborhood of $L$, see \cite{sylvan,GPS}. More precisely, $W_K$ is obtained by attaching the cotangent bundle $T^{\ast} [0,\infty)\times\Lambda_{K}$ to $T^{\ast} \R^{3}$ along $\Lambda_{K}\subset ST^{\ast}\R^{3}$. We let $C_{K}$ denote the cotangent fiber at $q\in [0,\infty)\times \Lambda_{K}$ and $C_{p}$ the cotangent fiber at $p\in \R^{3}$. Such handle attachments were considered in \cite{EL} where it was shown that there exists a natural surgery quasi-isomorphism $\Phi\colon CE(\Lambda_{K})\to CW^{\ast}(C_{K})$, where $CW^{\ast}$ denotes wrapped Floer cohomology. There are directly analogous quasi-isomorphisms
\[ 
CE_{01}^{(1)}\to CW^{\ast}(C_{K},C_{p}),\quad
CE_{10}^{(1)}\to CW^{\ast}(C_{p},C_{K}),\quad
CE_{11}^{(0)}\to CW^{\ast}(C_{p},C_{p}),
\]
under which the product $m$ corresponds to the usual triangle product $m_{2}$ on $CW^{\ast}$.

In \cite{shende}, the conormal torus $\Lambda_K$ of a knot $K\subset \R^{3}$ was studied via the category of sheaves microsupported in $L$. This sheaf category can also be described as the category of modules over the wrapped Fukaya category of $W_{K}$ which is generated by the two cotangent fibers $C_{K}$ and $C_{p}$. The knot contact homology triple with $m$ then have a natural interpretation as calculating morphisms in a category equivalent to that studied in \cite{shende}.

\section{Augmentations and the Gromov-Witten disk potential}\label{Sec:aug+disk}
Let $K\subset S^{3}$ be a knot and let $L_{K}$ denote its conormal Lagrangian. Shifting $L_{K}$ along the 1-form dual to its unit tangent vector we get a non-exact Lagrangian that is disjoint from the 0-section. We identify the complement of the $0$-section in $T^{\ast}S^{3}$ with the complement of the 0-section in the resolved conifold $X$. Under this identification, $L_{K}$ becomes a uniformly tame Lagrangian, see \cite{koshkin}, which is asymptotic to $\R\times\Lambda_{K}\subset\R\times S T^{\ast}S^{3}$ at infinity. The first condition implies that $L_{K}$ can be used as boundary condition for holomorphic curves and the second that at infinity, holomorphic curves on $(X,L_{K})$ can be identified with the $\R$-invariant holmorphic curves of $(\R\times ST^{\ast}S^{3},\R\times\Lambda_{K})$.

Since $c_{1}(X)=0$ and the Maslov class of $L_{K}$ vanishes, the formal dimension of any holomorphic curve in $X$ with boundary on $L_{K}$ equals $0$. Fixing a perturbation scheme one then gets a 0-dimensional moduli space of curves. Naively, the open Gromov-Witten invariant of $L_{K}$ would be the count of these rigid curves. Simple examples however show that such a count is not invariant under deformations, contradicting what topological string theory predicts. 

To resolve this problem on the Gromov-Witten side, we count more involved configurations of curves that we call \emph{generalized curves}. In this section we consider the simpler case of disks and then in Section \ref{Sec:SFT} the case of general holomorphic curves. The problems of open Gromov-Witten theory in this setting was studied from the mathematical perspective also by Iacovino \cite{iacovino1,iacovino2}. From the physical perspective, the appearance of more complicated configurations then bare holomorphic curves seems related to boundary terms in the path integral localized on the moduli space of holomorphic curves with boundary which, unlike in the case of closed curves, has essential codimension one boundary strata.   

\subsection{Augmentations of non-exact Lagrangians and disk potentials}
We will construct augmentations induced by the non-exact Lagrangian filling $L_{K}\subset X$. In order to explain how this works we first consider the case of the exact filling $L_{K}\subset T^{\ast} S^{3}$. The exact case is a standard ingredient in the study of Chekanov-Eliashberg dg-algebras, see e.g.~\cite{EHK}. Consider the algebra  $\mathcal{A}_{K}$ with coefficients in $\C[e^{\pm x},e^{\pm p},Q^{\pm 1}]$. Here we set $e^{p}=1$ since $p$ bounds in $L_{K}$ and $Q=1$ since the cotangent fiber sphere bounds in $S T^{\ast}S^{3}$. If $a$ is a Reeb chord of $\Lambda_{K}$, we let $\mathcal{M}_{n}(a)$ denote the moduli space of holomorphic disks with positive puncture at $a$ and boundary on $L_{K}$ that lies in the homology class $nx$. Then $\dim(\mathcal{M}_{n}(a))=|a|$ and we define the map $\epsilon_0\colon \mathcal{A}_{K}\to \C[e^{\pm x}]$ on degree $0$ Reeb chords $a$ as
\[ 
\epsilon_{0}(a)=\sum_{n} |\mathcal{M}_{n}(a)|e^{nx}.
\]  
\begin{lma}
The map $\epsilon_{0}\colon \mathcal{A}_{K}|_{Q=1,e^{p}=1}\to\C[e^{\pm x}]$ is a chain map, $\epsilon_{0}\circ d=0$.
\end{lma}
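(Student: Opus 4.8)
\smallskip
\noindent\textbf{Proof proposal.}
The plan is to run the standard argument that an exact Lagrangian filling induces an augmentation of the Chekanov--Eliashberg dg-algebra (see e.g.~\cite{EHK,EENS}), while keeping track of the boundary homology class $e^{x}$. The first step is a reduction to generators. We extend $\epsilon_{0}$ to all of $\mathcal{A}_{K}|_{Q=1,e^{p}=1}$ as a unital algebra homomorphism into the commutative ring $\C[e^{\pm x}]$, placed in degree $0$, by setting $\epsilon_{0}(e^{x})=e^{x}$, $\epsilon_{0}(e^{p})=1$, $\epsilon_{0}(Q)=1$, and $\epsilon_{0}(a)=0$ on Reeb chords $a$ with $|a|\neq 0$. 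Since $d$ is a derivation and $\epsilon_{0}$ an algebra map, $\epsilon_{0}\circ d$ automatically vanishes on a product as soon as it vanishes on each factor, so it suffices to prove $\epsilon_{0}(dg)=0$ for $g$ ranging over the generators. For the coefficient-ring generators $d$ vanishes. For a Reeb chord $a$ with $|a|=0$ we have $da=0$, because $\mathcal{A}_{K}$ is supported in non-negative degrees and there is nothing in degree $-1$. For $|a|\ge 2$, every monomial in $da$ has degree $|a|-1\ge 1$ and therefore contains at least one Reeb chord of positive degree, which $\epsilon_{0}$ sends to $0$; hence $\epsilon_{0}(da)=0$. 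The only case requiring holomorphic curves is $|a|=1$.

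Now fix $a$ with $|a|=1$ and, for each $n$, consider the moduli space $\mathcal{M}_{n}(a)$ of holomorphic disks in $T^{\ast}S^{3}$ with boundary on the exact conormal $L_{K}$, a single positive puncture asymptotic to $a$, and relative class $nx$ after capping with the reference disk; it has dimension $|a|=1$. The core of the argument is to identify the boundary of its compactification $\overline{\mathcal{M}_{n}(a)}$ as a compact oriented $1$-manifold. Because $T^{\ast}S^{3}$ is exact there are no closed holomorphic spheres, and because $L_{K}$ is exact there are no non-constant holomorphic disks with closed boundary on $L_{K}$ --- such a disk would have symplectic area equal to the period of the Liouville form over a loop in $L_{K}$, which vanishes. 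Hence no interior sphere bubbling and no boundary disk bubbling occur, and by SFT compactness the boundary of $\overline{\mathcal{M}_{n}(a)}$ consists precisely of two-level buildings: a top level lying in an $\R$-family in some $\mathcal{M}(a;\mathbf{c})$ with $|a|-|\mathbf{c}|=1$, which is exactly the kind of configuration contributing the monomial $\mathbf{c}$ to $da$, glued along its negative punctures to a bottom level consisting of rigid disks in $T^{\ast}S^{3}$ with boundary on $L_{K}$, one positive puncture at each Reeb chord of $\mathbf{c}$ --- forcing each of these to have degree $0$ --- with relative classes adding up, together with the coefficient $e^{A}$ of $\mathbf{c}$, to $nx$. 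A gluing theorem provides the converse, so these configurations are genuine boundary points; transversality holds for both the symplectization and the filling moduli spaces since each of the relevant disks carries a single positive puncture and so is not multiply covered.

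Equating the two endpoints of each component of $\overline{\mathcal{M}_{n}(a)}$ gives, for every $n$, that the signed number of these two-level boundary configurations is zero. Writing a typical monomial of $da$ as $\mathbf{c}=e^{A}c_{1}\cdots c_{m}$ with all $|c_{i}|=0$, a bottom-level filling of $\mathbf{c}$ is a choice of rigid disk $v_{i}\in\mathcal{M}_{n_{i}}(c_{i})$ for each $i$, and the resulting building lies in class $nx$ if and only if $\langle A\rangle+\sum_{i}n_{i}=n$, where $\langle A\rangle$ is the longitude coefficient of $A$ and its meridian and fiber coefficients are killed by $e^{p}=1$ and $Q=1$. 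Multiplying the vanishing count by $e^{nx}$, summing over $n$, and using that $\epsilon_{0}$ is multiplicative with $\epsilon_{0}(e^{A})=e^{\langle A\rangle x}$ assembles exactly
\[
0=\sum_{|\mathbf{c}|=|a|-1}|\mathcal{M}(a;\mathbf{c})|\,\epsilon_{0}(\mathbf{c})=\epsilon_{0}\!\Big(\sum_{|\mathbf{c}|=|a|-1}|\mathcal{M}(a;\mathbf{c})|\,\mathbf{c}\Big)=\epsilon_{0}(da),
\]
which is the desired identity $\epsilon_{0}\circ d=0$.

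The step I expect to be the main obstacle is this boundary analysis: pinning down the correct SFT compactness statement with its codimension-one strata, the matching gluing theorem, and, above all, the exclusion of boundary disk bubbling, which is exactly where exactness of $L_{K}\subset T^{\ast}S^{3}$ is used and which genuinely fails for the non-exact filling $L_{K}\subset X$ treated in the next section --- this failure being the source of the ``generalized curves''. A secondary but real technical point is orientations: one must check that the coherent orientations defining $d$ and the evaluation-compatible orientations on the filling moduli spaces combine so that the two ends of each component of $\overline{\mathcal{M}_{n}(a)}$ carry opposite signs. Finally one should confirm that the series defining $\epsilon_{0}(a)$, and the sum over $n$ above, are finite (or pass to an appropriate completion of $\C[e^{\pm x}]$), which follows from the same compactness that makes the differential $d$ well defined.
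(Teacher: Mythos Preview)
Your argument is correct and follows exactly the route the paper takes: the paper's one-line proof simply states that configurations contributing to $\epsilon_{0}\circ d$ are two-level broken disks in bijection with the boundary of the oriented $1$-manifolds $\mathcal{M}_{n}(c)$ for $|c|=1$, which is precisely the core of your analysis. Your reduction to degree-$1$ generators, the exclusion of disk bubbling via exactness of $L_{K}\subset T^{\ast}S^{3}$, and the identification of the remaining codimension-one strata with the terms of $\epsilon_{0}(da)$ are all correct elaborations of that same idea.
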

\begin{proof}
Configurations contributing to $\epsilon_{0}\circ d$ are two level broken disks that are in one to one correspondence with the boundary of the oriented 1-manifolds $\mathcal{M}_{n}(c)$, $|c|=1$.	
\end{proof}

We next consider the case of the non-exact Lagrangian filling $L_{K}\subset X$. In this case, $Q=e^{t}$, where $t=[\C P^{1}]\in H_{2}(X)$ and we look for a chain map $\mathcal{A}_{K}\to\C[e^{\pm x},Q^{\pm 1}]$. If $a$ is a Reeb chord, then let $\mathcal{M}_{r,n}(a)$ denote the moduli space of holomorphic disks in $X$ with boundary on $L_{K}$ in relative homology class $rt+nx$. 

Consider first the naive generalization of the exact case and define 
\[
\epsilon'(a)=\sum_{r,n}|\mathcal{M}_{r,n}(a)|Q^{r}e^{nx}.
\]
We look at the boundary of 1-dimensional moduli spaces $\mathcal{M}_{r,n}(c)$, $|c|=1$. 
Unlike in the exact case, two level broken curves do not account for the whole boundary of $\mathcal{M}_{r,n}(c)$ and consequently the chain map equation does not hold. The reason is that there are non-constant holomorphic disks without positive punctures on $L_{K}$ and a 1-dimensional family of disks can split off non-trivial such disks under so called boundary bubbling. Together with two level disks, disks with boundary bubbles account for the whole boundary of the moduli spacce.

The problem of boundary bubbling is well-known in Floer cohomology and was dealt with there using the method of bounding cochains introduced by Fukaya, Oh, Ohta, Ono \cite{FO3}. We implement this method in the current set up by introducing non-compact bounding chains (with boundary at infinity) as follows. 
We use a perturbation scheme to make rigid disks transversely cut out energy level by energy level. For each transverse disk $u$ we also fix a bounding chain $\sigma_{u}$, i.e., $\sigma_{u}$ is a non-compact 2-chain in $L_{K}$ that interpolates between the boundary $\partial u$ and a multiple of a fixed curve $\xi$ in $\Lambda_{K}$ in the longitude homology class $x\in H_{1}(\Lambda_{K})$ at infinity. This allows us to define the Gromov-Witten disk potential as a sum over finite trees $\Gamma$, where there is a rigid disk $u_{v}$ at each vertex $v\in\Gamma$ and for every edge connecting vertices $v$ and $v'$ there is an intersection point between $\partial u_{v}$ and $\sigma_{v'}$ weighted by $\pm\frac12$, according to the intersection number. We call such a tree a \emph{generalized disk} and define the Gromov-Witten disk potential $W_{K}(x,Q)$ as the generating function of generalized disks.

\begin{figure}[htp]
	\centering
	\includegraphics[bb=-150 250 1100 650, width=.8\linewidth]{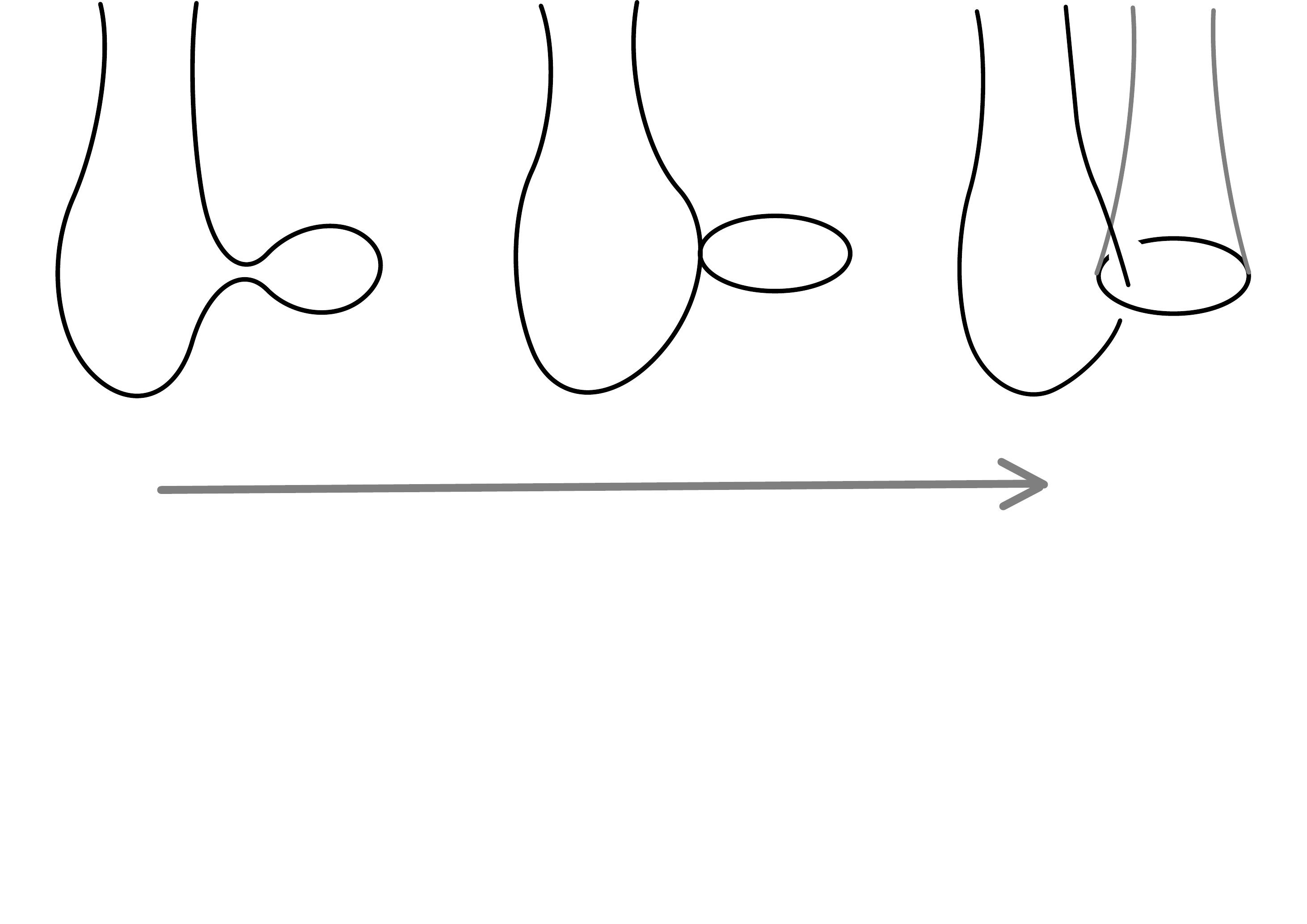}
	\caption{Bounding chains turn boundary breaking into interior points in moduli spaces: the disk family continues as a family of disks with the bounding chain inserted.}
	\label{fig:boundingchain}
\end{figure}

We then define $\mathcal{M}'_{r,n}(a)$ as the moduli space of holomorphic disks with positive puncture at $a$ and with insertion of bounding chains of generalized disks along its boundary such that the total homology class of the union of all disks in the configuration lies in the class $rt+nx$. Let $\epsilon\colon\mathcal{A}_{K}\to\C[e^{\pm x},Q^{\pm 1}]$ be the map
\[ 
\epsilon(a)=\sum_{r,n}|\mathcal{M}_{r,n}'(a)|Q^{r}e^{nx}.
\]

\begin{prp}
If
\begin{equation}\label{eq:augparam} 
p=\frac{\partial W_{K}}{\partial x}
\end{equation}
then $\epsilon$ is a chain map, $\epsilon\circ d=0$. Consequently, \eqref{eq:augparam}  
parameterizes a branch of the augmentation variety and Theorem \ref{t:diskpotential} follows.
\end{prp}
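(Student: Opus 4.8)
The plan is to count the boundary of the compactified one-dimensional moduli spaces $\mathcal{M}'_{r,n}(c)$ for each degree-one Reeb chord $c$ of $\Lambda_K$, and to show that the bounding-chain bookkeeping exactly converts the boundary-bubbling strata into interior points, so that the only genuine boundary left consists of two-level broken configurations --- which is what $\epsilon\circ d(c)$ records. First I would describe the codimension-one boundary of $\mathcal{M}'_{r,n}(c)$ by SFT-type compactness adapted to the non-exact setting: a one-parameter family of (decorated) holomorphic disks with a positive puncture at $c$ and with bounding-chain insertions of generalized disks along the boundary can degenerate in three ways --- (i) a negative Reeb chord breaking off at the positive end, giving a two-level curve whose lower level is rigid and whose asymptotics assemble into a monomial $\mathbf{c}$ with $|\mathbf{c}|=1$ in the differential $dc$; (ii) an intersection point between the disk boundary and one of the inserted bounding chains $\sigma_v$ running off to the boundary-at-infinity of $\sigma_v$, i.e.\ to the fixed curve $\xi$ in the longitude class, which produces exactly the $e^{nx}$-power shift and is the phenomenon that makes $\epsilon$ land in $\C[e^{\pm x},Q^{\pm1}]$ rather than forcing $e^p$ to appear; and (iii) boundary bubbling in which a genuine non-constant disk (with no positive puncture) splits off from the main disk along its boundary.

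The key step is case (iii). Here I would invoke precisely the construction of the Gromov-Witten disk potential recalled just above: each boundary-bubbling stratum where a rigid disk $u_v$ bubbles off from $\partial u$ is, by the choice of perturbation scheme and bounding chains $\sigma_{u_v}$, glued to a \emph{different} boundary stratum of the \emph{same} one-parameter family in which the configuration instead continues as a disk carrying the bounding chain $\sigma_{u_v}$ inserted at an interior intersection point (this is exactly the mechanism depicted in Figure~\ref{fig:boundingchain}). Thus the bubbling strata are internal to $\mathcal{M}'_{r,n}(c)$, cancelling in pairs, and contribute nothing to $\partial\mathcal{M}'_{r,n}(c)$. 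The subtle point is \emph{matching the homology classes and the $\pm\frac12$ weights}: when a disk in class $\beta$ bubbles off, the main disk loses class $\beta$ but gains the bounding chain $\sigma_{u_v}$, which by definition interpolates between $\partial u_v$ and $n_v$ copies of $\xi$, so the total class $rt+nx$ is preserved precisely because $\sigma_{u_v}$ is a $2$-chain in $L_K$ and $\xi$ carries the longitude class $x$; and one must check that the two ways a given intersection point can be created (boundary bubbling versus an interior intersection sweeping across $\partial u$) carry opposite signs, which is where the factor $\pm\frac12$ weighted by intersection number in the definition of generalized disk does its work. Summing over all trees $\Gamma$ of generalized disks, the generating function of these cancelling strata is the derivative relation $p=\partial W_K/\partial x$, where the $p$-dependence of $dc$ in $\mathcal{A}_K$ gets saturated: an insertion of a bounding chain in the longitude class $x$ contributes a factor of $e^{nx}$, and the full sum over insertions reconstructs $e^p$ evaluated at $e^p = e^{\partial W_K/\partial x}$.

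Granting that the remaining boundary of $\mathcal{M}'_{r,n}(c)$ for $|c|=1$ consists only of two-level broken curves, the standard argument finishes: $0 = \sum_{r,n}|\partial\mathcal{M}'_{r,n}(c)|\,Q^r e^{nx}$, and by (i) the right-hand side is $\sum_{\mathbf{c}}|\mathcal{M}(c;\mathbf{c})|\,\epsilon(\mathbf{c})$ over monomials $\mathbf{c}$ with $|c|-|\mathbf{c}|=1$, where $\epsilon$ is extended multiplicatively over $\C[e^{\pm x},Q^{\pm1}]$. But that sum is precisely $\epsilon(dc)$ once we substitute $e^p = e^{\partial W_K/\partial x}$ for the $e^{\pm p}$-coefficients appearing in $dc$ (exactly as in the unknot and trefoil formulas of Section~\ref{sec:ex1}, where the $e^p$ factors are visible). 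Hence $\epsilon\circ d = 0$, i.e.\ $\epsilon$ is an augmentation of $\mathcal{A}_K$ with coefficients specialized by \eqref{eq:augparam}, so the pair $(e^x, e^p) = (e^x, e^{\partial W_K/\partial x})$ lies in the augmentation variety $V_K$ for all $x$ and $Q$; letting $x$ vary traces out a branch. This is exactly the content of Theorem~\ref{t:diskpotential}.

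\textbf{Main obstacle.} The genuinely hard part is establishing the gluing/cancellation in case (iii) rigorously: one needs a coherent perturbation scheme, defined energy-level by energy-level, together with a compatible system of bounding chains $\sigma_u$, such that every boundary-bubbling stratum is glued to a unique interior-intersection stratum with the opposite sign, \emph{simultaneously for all trees $\Gamma$ and all homology classes}. This is precisely the point at which the construction is not currently on rigorous footing (cf.\ the Remark following Theorem~\ref{t:diskpotential}); the combinatorics of the trees, the transversality of the iterated moduli spaces with bounding-chain insertions, and the orientation/sign analysis of the $\pm\frac12$ weights are where all the real work --- and the present gap with full mathematical rigor --- resides.
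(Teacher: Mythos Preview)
Your proposal is correct and follows essentially the same approach as the paper: use the bounding chains to cancel boundary bubbling (your case (iii), the paper's Figure~\ref{fig:boundingchain}), so that the remaining boundary of the one-dimensional moduli space consists of two-level broken disks with insertions, and then identify the substitution $e^{p}=e^{\partial W_{K}/\partial x}$ with the count of insertions at infinity. The paper phrases this last step a bit more combinatorially --- a bounding chain going $n$ times around $x$ can be inserted $nm$ times in a curve going $m$ times around $p$, which directly gives the exponential substitution --- whereas you frame it via insertion points escaping to $\partial_\infty\sigma_v$, but the content is the same.
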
  

\begin{proof}
The bounding chains are used to remove boundary bubbling from the boundary of the moduli space, see Figure \ref{fig:boundingchain}. As a consequence the boundary of the moduli space of disks with one positive puncture and with insertions of generalized disks correspond to two level disks with insertions. It remains to count the disks at infinity with insertions. At infinity all bounding chains are multiples of the longitude generator $x$. A bounding chain going $n$-times around $x$ can be inserted $nm$ times in a curve that goes $m$ times around $p$. It follows that the substitution $e^{p}=e^{\frac{\partial W_{K}}{\partial x}}$
corresponds to counting disks with insertions.
\end{proof}

\begin{cor}
The Gromov-Witten disk potential $W_{K}$ is an analytic function.
\end{cor}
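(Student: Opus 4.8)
The plan is to read off the analyticity of $W_K$ from the algebraicity of $e^{p}=e^{\partial W_K/\partial x}$ supplied by the preceding Proposition. Set $u:=e^{\partial W_K/\partial x}$. Since the Gromov--Witten disk potential $W_K$ is, by construction, a formal power series in $e^{x}$ and $Q$ with no $e^{0\cdot x}$-term, $\partial W_K/\partial x$ is a formal power series with zero constant term and $u=1+(\text{higher order})$ is a well-defined formal power series with constant term $1$; in particular $u$ is a unit. By the Proposition the point $(e^{x},u,Q)$ lies on $V_K$ for all $x$ in the relevant range, hence
\[
\Aug_K\bigl(e^{x},\,u,\,Q\bigr)=0
\]
as an identity of formal power series in $e^{x}$ and $Q$, where $\Aug_K$ is the nonzero defining polynomial of $V_K$. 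Clearing denominators (using that $u$ is a unit), $u$ is a formal power series algebraic over $\C[e^{x},Q]$; equivalently, with $Q$ regarded as a parameter, $u$ presents a branch of the plane algebraic curve $\{\Aug_K(\cdot,\cdot,Q)=0\}$ as a graph $e^{p}=u(e^{x})$ over the $e^{x}$-line.

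The proof then proceeds in three steps. (i) A formal power series algebraic over $\C$ is convergent. In one variable this is classical: our branch is a graph over $e^{x}$, hence unramified over $e^{x}=0$, so the corresponding Puiseux series has integer exponents and converges; the joint statement in the two variables $(e^{x},Q)$ is the classical fact that the ring of algebraic power series is contained in the ring of convergent power series. Thus $u$ is analytic in a neighbourhood of $(e^{x},Q)=(0,0)$. (ii) Since $u(0,0)=1\neq 0$, the branch of $\log$ that vanishes at $e^{x}=0$ is analytic near the origin, so $\partial W_K/\partial x=\log u$ is analytic there, and its Taylor expansion is the formal series $\partial W_K/\partial x=\sum_{n\ge1}\sum_{r}n\,c_{n,r}\,Q^{r}e^{nx}$ when we write $W_K=\sum_{n\ge1}\sum_{r}c_{n,r}\,Q^{r}e^{nx}$. (iii) As $|c_{n,r}|\le|n\,c_{n,r}|$ for $n\ge1$, the series for $W_K$ converges absolutely wherever the (now known to be convergent) series for $\partial W_K/\partial x$ does, so $W_K$ is analytic there as well.

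I do not expect a substantial obstacle: the geometric content sits entirely in the Proposition, and this corollary is a formal consequence of the analyticity of branches of algebraic functions. The one point deserving care is the displayed identity $\Aug_K(e^{x},u,Q)=0$: one should note that it holds already at the level of formal power series --- it is produced from the chain-map equations $\epsilon\circ d=0$ (with the substitution $e^{p}=u$) by eliminating the values of the degree-$0$ Reeb chords, an algebraic operation valid over the formal power series ring --- and that $\Aug_K$ is a genuinely nonzero polynomial, so that the word ``algebraic'' has content. With these observations in place, steps (i)--(iii) complete the argument.
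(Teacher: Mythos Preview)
Your proposal is correct and follows the same route as the paper: the augmentation variety is algebraic (obtained by elimination from the knot contact homology differential), and the Proposition identifies $e^{p}=e^{\partial W_K/\partial x}$ with a branch of this algebraic curve, whence analyticity of $W_K$. The paper's proof is a two-line sketch that simply invokes ``algebraic $\Rightarrow$ analytic''; you have spelled out the passage from an algebraic formal power series for $e^{\partial W_K/\partial x}$ to convergence, then through $\log$ and termwise comparison to analyticity of $W_K$ itself, which is exactly the content the paper leaves implicit.
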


\begin{proof}
The defining equation of the augmentation variety can be found from the knot contact homology differential by elimination theory. It is therefore an algebraic variety and the Gromov-Witten disk potential in \eqref{eq:augparam} is an analytic function.
\end{proof}

\subsection{Augmentation varieties in basic examples}
We calculate the augmentation variety from the formulas in Section \ref{sec:ex1}. 
\subsubsection{The unknot}
The augmentation polynomial for the unknot $U$ is determined directly by \eqref{eq:unknotdiff}: the algebra admits an augmentation exactly when $dc=0$ and
\[ 
\Aug_{U}=1-e^{x}-e^{p}+Qe^{x}e^{p}.
\] 
\subsubsection{The trefoil}
We need to find the locus where the right hand sides in Section \ref{ssec:trefoildiff} has common roots. The augmentation polynomial is found as:
\begin{align*}
\Aug_{T} &=(e^{x}e^{2p} + Q^{2})a_{12}(e^{p}(dc_{21}) - Q(dc_{22})) - (e^{x}e^{2p} + Q^2)(Q(dc_{21}) + e^{x}e^{p}d(c_{22}))\\ 
&+ e^{x}(e^{2p} - Q)(e^{p}(dc_{21}) - Q(dc_{22})) + e^{x}(e^{2p} - Q)(e^{x}e^{2p} + Q^{2})(db_{12})\\
&= e^{2x}(e^{4p}-e^{3p}) + e^{x}(e^{4p}-e^{3p}Q+2e^{2p}(Q^{2}-Q)-e^{p}Q^{2}+Q^{2}) -(e^{p}Q^{3}-Q^{4}).
\end{align*}

\section{Legendrian SFT and open Gromov-Witten theory}\label{Sec:SFT}
This section concerns the higher genus counterpart of the results in Section \ref{Sec:aug+disk}.

\subsection{Additional geometric data for Legendrian SFT of knot conormals}
We outline a definition of relevant parts of Legendrian SFT (including the open Gromov-Witten potential), for the Lagrangian conormal $L_{K}$ of a knot $K\subset S^{3}$ in the resolved conifold, $L_{K}\subset X$. As in the case of holomorphic disks, see Section \ref{Sec:aug+disk}, the main point of the construction is to overcome boundary bubbling. In the disk case there is a 1-dimensional disk that interacts through boundary splitting/crossing with rigid disks. Since the moving disk is distinguished from the rigid disks, it is sufficient to use bounding chains for the rigid disks only.

In the case of higher genus curves there is no such separation. A 1-dimensional curve can boundary split on its own. To deal with this, we introduce additional geometric data that defines what might be thought of as dynamical bounding chains.  We give a brief description here and refer to \cite{EN} for more details. The construction was inspired by self linking of real algebraic links (Viro's encomplexed writhe, \cite{viro}) as described in \cite{shade}.

\subsubsection{An auxiliary Morse function}
Consider a Morse function $f\colon L_K\to\R$ without maximum and with the following properties.
The critical points of $f$ lie on $K$ and are: a minimum $\kappa_{0}$ and an index 1 critical point $\kappa_{1}$. Flow lines of $\nabla f$ connecting $\kappa_{0}$ to $\kappa_{1}$ lie in $K$ and outside a small neighborhood of $K$, $\nabla f$ is the radial vector field along the fiber disks in $L_{K}\approx K\times\R^{2}$. Note that the unstable manifold $W^{\rm u}(\kappa_{1})$ of $\kappa_{1}$ is a disk that intersects $\Lambda_{K}$ in the meridian cycle $p$.

\subsubsection{A 4-chain with boundary twice $L_{K}$}
Start with a $3$-chain $\Gamma_{K}\subset ST^{\ast}S^{3}$ with the following properties: $\partial \Gamma_{K}= 2\cdot\Lambda_{K}$, near the boundary $\Gamma_{K}$ agrees with the union of small length $\epsilon>0$ flow lines of $\pm R$ starting on $\Lambda_{K}$, and $\Gamma_{K}-\partial\Gamma_{K}$ is disjoint from $\Lambda_{K}$, see \cite{EN}. Identify $\left([0,\infty)\times ST^{\ast}S^{3},[0,\infty)\times\Lambda_{K}\right)$ with $(X,L_{K})-(\bar X,\bar L_{K})$, where $(\bar X,\bar L_{K})$ is compact, and let $C_{K}^{\infty}=[0,\infty)\times\Gamma_{K}$.

Consider the vector field $v(q)=\frac{\nabla f(q)}{|\nabla f(q)|}$, $q\in L_{K}-\{ \kappa_{0},\kappa_{1}\}$ and let $G$ be the closure of the length $\epsilon>0$ half rays of $\pm J v(q)$ starting at $q\in L_{K}$ in $\bar L_{K}$ and $G'$ its boundary component that does not intersect $L_{K}$. A straightforward homology calculation shows that there exists a  $4$-chain $C_{K}^{0}$ in $X-L_{K}$ with boundary $\partial C_{K}^{0}=G' \cup \partial C_{K}^{\infty}$.
Define $C_{K}= C_{K}^{\infty}\times[0,\infty) \cup C_{K}^{0}\cup G$.
Then $C_{K}$ is a $4$-chain with regular boundary along $2\cdot L_{K}$ and inward normal $\pm J\nabla f$. Furthermore, $C_{K}$ intersects $L_{K}$ only along its boundary and is otherwise disjoint from it.
We remark that in order to achieve necessary transversality, we also need to perturb the Morse function and the chain slightly near the Reeb chord endpoints in order to avoid intersections with trivial strips, see \cite{EN}.

\subsection{Bounding chains for holomorphic curves}\label{sec:boundingchains}
We next associate a bounding chain to each holomorphic curve $u\colon (\Sigma,\partial\Sigma)\to (X,L_{K})$ in general position with respect to $\nabla f$ and $C_{K}$. Consider first the case without punctures. The boundary $u(\partial\Sigma)$ is a collection of closed curves contained in a compact subset of $L_{K}$. By general position, $u(\partial\Sigma)$ does not intersect the stable manifold of $\kappa_{1}$. Define $\sigma'_u$ as the union of all flow lines of $\nabla f$ that starts on $u(\partial\Sigma)$. Since $f$ has no index $2$ critical points and since $\nabla f$ is vertical outside a compact, $\sigma'_u\cap(\{T\}\times\Lambda_{K})$ is a closed curve, independent of $T$ for all sufficiently large $T>0$. Let $\partial_{\infty}\sigma'_u\subset\Lambda_{K}$ denote this curve and assume that its homology class is $nx + mp\in H_{1}(\Lambda_{K})$. Define the bounding chain $\sigma_{u}$ of $u$ as
\begin{equation}\label{eq:defboundingchain1}
\sigma_{u}=\sigma_{u}'-m\cdot W^{\mathrm{u}}(\kappa_{1}).
\end{equation}
Then $\sigma_{u}$ has boundary $\partial\sigma_{u}=\partial u$ and boundary at infinity $\partial^{\infty}\sigma_{u}$ in the class $nx+0p$. 

Consider next the general case when $u\colon(\Sigma,\partial\Sigma)\to(X,L_{K})$ has punctures at Reeb chords $c_{1},\dots, c_{m}$. Let $\delta_{j}$ denote the capping disk of $c_{j}$ and let $\bar X_{T}=\bar X\cup ([0,T]\times ST^{\ast}S^{3})$. Fix a sufficiently large $T>0$ and replace $u(\partial\Sigma)$ in the construction of $\sigma_u'$ above by the boundary of the chain 
$(u(\Sigma)\cap \bar X_{T}) \cup \bigcup_{j=1}^{m}\delta_{j}$
and then proceed as there. This means that we cap off the holomorphic curve by adding capping disks and construct a bounding chains of this capped disk.

\subsection{Generalized holomorphic curves and the SFT-potential}\label{sec:gencurves}
The SFT counterpart of the chain map equation for augmentations is derived from 1-dimensional moduli spaces of generalized holomorphic curves. The moduli spaces are stratified and the key point of our construction is to patch the 1-dimensional strata in such a way that all boundary phenomena in the compact part of $(X,L_{K})$ cancel out, leaving only splitting at Reeb chords and intersections with bounding chains at infinity. We start by describing the curves in the 1-dimensional strata. 

As in the disk case we assume we have a perturbation scheme for transversality. Again the perturbation is inductively constructed, we first perturb near the simplest curves (lowest energy and highest Euler characteristic) and then continue inductively in the hierarchy of curves, making all holomorphic curves transversely cut out and transverse with respect to the Morse data fixed. We also need transversality with respect to $C_{K}$ that we explain next. A holomorphic curve $u$ in general position has tangent vector along the boundary everywhere linearly independent of $\nabla f$. Let the \emph{shifting vector field} $\nu$ along $\partial u$ be a vector field that together with the tangent vector of $\partial u$ and $\nabla f$ gives a positively oriented triple. Let $\partial u_{\nu}$ denote $\partial u$ shifted slightly along $\nu$. By construction $\partial u_{\nu}$ is disjoint from a neighborhood of the boundary of $\sigma_{u}$. Let $u_{J\nu}$ denote $u$ shifted slightly along an extension of $J\nu$ supported near the boundary of $u$. We chose the perturbation so that $u_{J\nu}$ is transverse to $C_{K}$. 

With such perturbation scheme constructed we define generalized holomorphic curves to consist of the following data.
\begin{itemize}
	\item A finite oriented graph $\Gamma$ with vertex set $V_\Gamma$ and edge set $E_\Gamma$. 
	\item To each $v\in V_\Gamma$ is associated a (generic) holomorphic curve $u^v$ with boundary on $L_{K}$ (and possibly with positive punctures). 
	\item To each edge $e\in E_\Gamma$ that has its endpoints at distinct vertices, $\partial e=v_+-v_-$, $v_+\ne v_-$, is associated an intersection point of the boundary curve $\partial u^{v_{-}}$ and the bounding chain $\sigma_{u^{v_+}}$. 
	\item
	To each edge $e\in E_\Gamma$ which has its endpoints at the same vertex $v_0$, $\partial e=v_0-v_0=0$, is associated either an intersection point in $\partial u^{v_{0}}_{\nu}\cap \sigma_{u^{v_0}}$  or an intersection point in $u^{v_0}_{J\nu}\cap C_{K}$.  
\end{itemize}
We call such a configuration a \emph{generalized holomorphic curve over $\Gamma$} and denote it $\Gamma_{\mathbf{u}}$, where $\mathbf{u}=\{u_{v}\}_{v\in V_\Gamma}$ lists the curves at the vertices.

\begin{rmk}
Several edges of a generalized holomorphic curve may have the same intersection point associated to them.  
\end{rmk}

We define the Euler characteristic of a generalized holomorphic curve $\Gamma_{\mathbf{u}}$ as
\[
\chi(\Gamma_{\mathbf{u}})= \sum_{v\in V_\Gamma} \chi(u_v) -\# E_\Gamma,
\]
where $\# E_\Gamma$ denotes the number of edges of $\Gamma$, and the dimension of the moduli space containing $\Gamma_{\mathbf{u}}$ as 
\[
\dim(\Gamma_{\mathbf{u}})=\sum_{v\in V_\Gamma} \dim(u_v),
\]
where $\dim(u_{v})$ is the formal dimension of $u_{v}$.

In particular, if $\dim(\Gamma_{\mathbf{u}})=0$ then $u_{v}$ is rigid for all $v\in V_\Gamma$ and if $\dim(\Gamma_{\mathbf{u}})=1$ then $\dim(u_{v})=1$ for exactly one $v\in V_\Gamma$ and $u_v$ is rigid for all other $v\in V_\Gamma$. The relative homology class represented by $\Gamma_{\mathbf{u}}$ is the sum of the homology classes of the curves $u_{v}$ at its vertices, $v\in V_\Gamma$. 

We define the SFT-potential to be the generating function of generalized rigid curves over graphs $\Gamma$ as just described:
\[
\mathbf{F}_{K} = \sum_{m,k,\mathbf{c^{+}}} F_{m,k,\chi,\mathbf{c}^{+}}\, g_{s}^{-\chi+\ell(\mathbf{c}^{+})}\, e^{mx} Q^{k}\, \mathbf{c}^{+},
\]
where $F_{m,k,\chi,\mathbf{c}^{+}}$ counts the algebraic number of generalized curves $\Gamma_{\mathbf{u}}$ in homology class $mx+kt\in H_{2}(X,L_{K})$ with $\chi(\Gamma_{\mathbf{u}})=\chi$ and with positive punctures according to the Reeb chord word $\mathbf{c}^{+}$. A generalized curve $\Gamma_{\mathbf{u}}$ contributes to this sum by the product of the weights of the curves at its vertices (the count coming from the perturbation scheme) times $\pm \frac12$ for each edge where the sign is determined by the intersection number.

\begin{rmk}
For computational purposes we note that we can rewrite the sum for $\mathbf{F}_{K}$ in a simpler way. Instead of the complicated oriented graphs with many edges considered above, we look at unoriented graphs with at most one edge connecting every pair of distinct vertices and no edge connecting a vertex to itself. We call such graphs \emph{simple graphs}. We map complicated graphs to simple graphs by collapsing edges to the basic edge and removing self-edges. Then the contribution from all graphs lying over a simple graph is given the product of weights at the vertices times the product of $e^{\lk_{e} g_{s}}$, where the linking coefficient of an edge $e$ connecting vertices corresponding to the curves $u$ and $u'$ is the intersection number $\sigma_{u}\cdot \partial u'=\partial u\cdot \sigma_{u'}$, and $e^{\frac12\slk_{v} g_{s}}$, where the linking coefficient $\slk_{v}$ of a vertex $v$ is the sum of intersection numbers $\partial u_{\nu}\cdot \sigma_{u}+ u_{J\nu}\cdot C_{K}$, where $u$ is the curve at $v$.
\end{rmk}

\subsection{Compactification of 1-dimensional moduli spaces}
The generalized holomorphic curves that we defined in Section \ref{sec:gencurves} constitute the  open strata of the 1-dimensional moduli. More precisely, the generalized curve $\Gamma_{\mathbf{u}}$ has a \emph{generic} curve of dimension one at exactly one vertex. Except for the usual holomorphic degenerations in 1-parameter families, there are new boundary phenomena arising from the 1-dimensional curve becoming non-generic relative $\nabla f$ and $C_{K}$. More precisely we have the following description of the boundary of 1-dimensional starta of generalized holomorphic curves (we write $u_{v}$ for the curve at vertex $v\in \Gamma_{\mathbf{u}}$).

\begin{lma}\cite{EN}\label{l:degenrations}
Generic degenerations of the holomorphic curves $u_{v}$ at the vertices $v\in V_{\Gamma}$ are as follows (see Figure \ref{fig:degenerations}):  
\begin{itemize}
	\item[$(1)$] Splitting at Reeb chords.   
	\item[$(2)$] Hyperbolic boundary splitting.
	\item[$(3)$] Elliptic boundary splitting.  
\end{itemize}
Generic degenerations with respect to $\nabla f$, $C_{K}$, and capping paths are as follows: 
\begin{itemize}
	\item[$(4)$] Crossing the stable manifold of $\kappa_{1}$: the boundary of the curve intersects the stable manifold of $\kappa_{1}$.
	\item[$(5)$] Boundary crossing: a point in the boundary mapping to a bounding chain moves out across the boundary of a bounding chain.
	\item[$(6)$] Interior crossing: An interior marked point mapping to $C_K$ moves across the boundary $L_{K}$ of $C_{K}$.
	\item[$(7)$] Boundary kink: The boundary of a curve becomes tangent to $\nabla f$ at one point.
	\item[$(8)$] Interior kink: A marked point mapping to $C_K$ moves to the boundary in the holomorphic curve.
	\item[$(9)$] The leading Fourier coefficient at a positive puncture vanishes.
\end{itemize}
\end{lma}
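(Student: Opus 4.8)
The plan is to prove Lemma~\ref{l:degenrations} by the standard SFT/Gromov compactness bootstrap adapted to the extra auxiliary data $(\nabla f, C_{K})$, organizing the argument as a dimension count on the \emph{walls} of the enlarged configuration space. First I would fix, for each vertex $v$ of the combinatorial type $\Gamma$, the universal moduli space of holomorphic curves $u_{v}$ (with prescribed positive punctures and relative homology class), together with the finitely many marked points that are constrained to lie on $C_{K}$ or on bounding chains of neighboring vertices. Over the transverse perturbation scheme this is a smooth manifold whose dimension is $\sum_{v}\dim(u_{v}) - (\text{codimension of each incidence constraint})$; the incidence constraints are cut out transversally by the last clause of the perturbation construction (the $u_{J\nu}$ transverse to $C_{K}$ statement). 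Restricting to the stratum with $\dim(\Gamma_{\mathbf u})=1$ we get a one-manifold, and the task is to enumerate the ends.

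The heart of the proof is a two-part classification of ends. The purely holomorphic degenerations are governed by SFT compactness \cite{EGH} (as invoked already in Lemma~\ref{l:dgabasics}): a sequence in a one-dimensional family either converges to a broken/nodal configuration. Because $c_{1}(X)=0$, the Maslov class of $L_{K}$ vanishes, and the perturbation makes all curves transverse, the limiting configuration has at most one ``level'' where a one-dimensional modulus is used up, and the codimension-one pieces are exactly: breaking along a Reeb chord, which gives $(1)$; a boundary node splitting off a curve of genus $0$ with a single boundary circle where the two pieces share one boundary point, which is hyperbolic boundary splitting $(2)$; and a boundary node where a disk with boundary on $L_{K}$ pinches off along an interior-of-boundary arc producing two closed boundary components, which is elliptic boundary splitting $(3)$. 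Higher-codimension degenerations (two simultaneous breakings, a nodal bubble plus a breaking, etc.) are ruled out by the dimension count since every such stratum has codimension $\geq 2$. The second part treats the ways the family can fail to stay generic with respect to the Morse/chain data while staying holomorphic: these are exactly the codimension-one loci in the one-parameter family where an incidence or transversality condition degenerates. A point of $\partial u$ can hit the stable manifold $W^{s}(\kappa_{1})$ (whose codimension in $L_{K}$ is $2$, but sweeping $\partial\Sigma$ across it is a codimension-one event), giving $(4)$; a marked point constrained to a bounding chain $\sigma_{u^{v'}}$ can cross the boundary $\partial\sigma_{u^{v'}}$, giving $(5)$; an interior marked point on $C_{K}$ can cross $\partial C_{K}=2\cdot L_{K}$, giving $(6)$; the boundary curve can become tangent to $\nabla f$ at one point (a codimension-one condition on the boundary arc), changing the combinatorics of $\sigma'_{u}$, which is $(7)$; an interior $C_{K}$-marked point can run to the boundary of $\Sigma$, which is $(8)$; and finally the asymptotic behavior at a positive puncture can degenerate when the leading Fourier coefficient vanishes, $(9)$ --- this is the analytic phenomenon that links the construction to the encomplexed-writhe picture of \cite{shade,viro}.

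The main obstacle I expect is \textbf{(7)}, the boundary kink, because it is the genuinely new phenomenon with no analogue in the closed-string or exact-disk settings and it is where the construction of $C_{K}$ and the shifting vector field $\nu$ are really needed. When $\partial u$ becomes tangent to $\nabla f$, the flowout chain $\sigma'_{u}$ acquires or loses a fold, so $\partial^{\infty}\sigma'_{u}$ jumps by a meridian class $p$ and the correction term $-m\cdot W^{\mathrm u}(\kappa_{1})$ in \eqref{eq:defboundingchain1} changes; one must check that the resulting discontinuity in the bounding chain is exactly accounted for by an $u_{J\nu}\cap C_{K}$ self-intersection appearing or disappearing, i.e.\ that the $4$-chain $C_{K}$ with $\partial C_{K}=2\cdot L_{K}$ and inward normal $\pm J\nabla f$ was designed precisely so that the interior self-linking $\slk_{v}$ jumps in compensating fashion. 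This is a local model computation near the tangency point, and I would carry it out by writing $\partial u$ in a neighborhood of the kink as a graph over a geodesic, expanding $\nabla f$ to first order, and matching the change in the flowout against the local contribution of $C_{K}$; this is the content of the transversality perturbations ``near the Reeb chord endpoints'' and ``near the Morse data'' alluded to in the text. The remaining items are comparatively routine once the perturbation scheme of Section~\ref{sec:gencurves} is in place: $(1)$--$(3)$ are SFT compactness, $(4)$--$(6)$ and $(8)$ are standard cobordism-of-moduli-space arguments for incidence conditions with chains in general position, and $(9)$ follows from the asymptotic analysis of holomorphic curves near Reeb chords (unique continuation plus the Fourier expansion of the ends). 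Full details are in \cite{EN}.
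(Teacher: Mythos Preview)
The paper does not give a proof of this lemma; it simply cites \cite{EN} and moves on to Proposition~\ref{prp:cancel}. So there is no in-text argument to compare against. Your overall strategy --- SFT/Gromov compactness for the holomorphic degenerations and a codimension-one wall analysis for the incidence conditions with the auxiliary data $(\nabla f, C_K, \text{capping paths})$ --- is the right one and is presumably what \cite{EN} does.

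Two corrections are worth making. First, your descriptions of $(2)$ and $(3)$ are not quite what the paper means. Look at the proof of Proposition~\ref{prp:cancel}: a \emph{hyperbolic} boundary splitting is a holomorphic curve acquiring a boundary double point (two boundary branches meeting at a point of $L_K$), which can be resolved in two ways $u_+$, $u_-$; it need not split off a genus-zero piece. An \emph{elliptic} boundary splitting is an \emph{interior} point of the domain mapping to $L_K$ (a complex branch tangent to the Lagrangian), again resolvable in two ways $u^+$, $u^-$; it is not a boundary arc pinching off. These are the hyperbolic/elliptic real nodes in the sense of \cite{viro,shade}, and getting them straight matters for the cancellation arguments that follow.

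Second, and more importantly, the ``main obstacle'' you spend most of your effort on --- showing that the jump in $\sigma_u$ at a boundary kink $(7)$ is compensated by a change in $u_{J\nu}\cap C_K$ --- is not part of this lemma at all. Lemma~\ref{l:degenrations} only \emph{lists} the codimension-one degenerations; the compensations $(7)\leftrightarrow(8)$, $(2)\leftrightarrow(5)$, $(3)\leftrightarrow(6)$ are the content of Proposition~\ref{prp:cancel}. For the lemma itself you only need to argue that $(7)$ is a codimension-one event (tangency of a curve to a generic vector field is a single transversal condition), not that it cancels anything. Separating the classification from the cancellation will make both arguments cleaner.
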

 
\begin{figure}[htp]
	\centering
	\includegraphics[width=.75\linewidth]{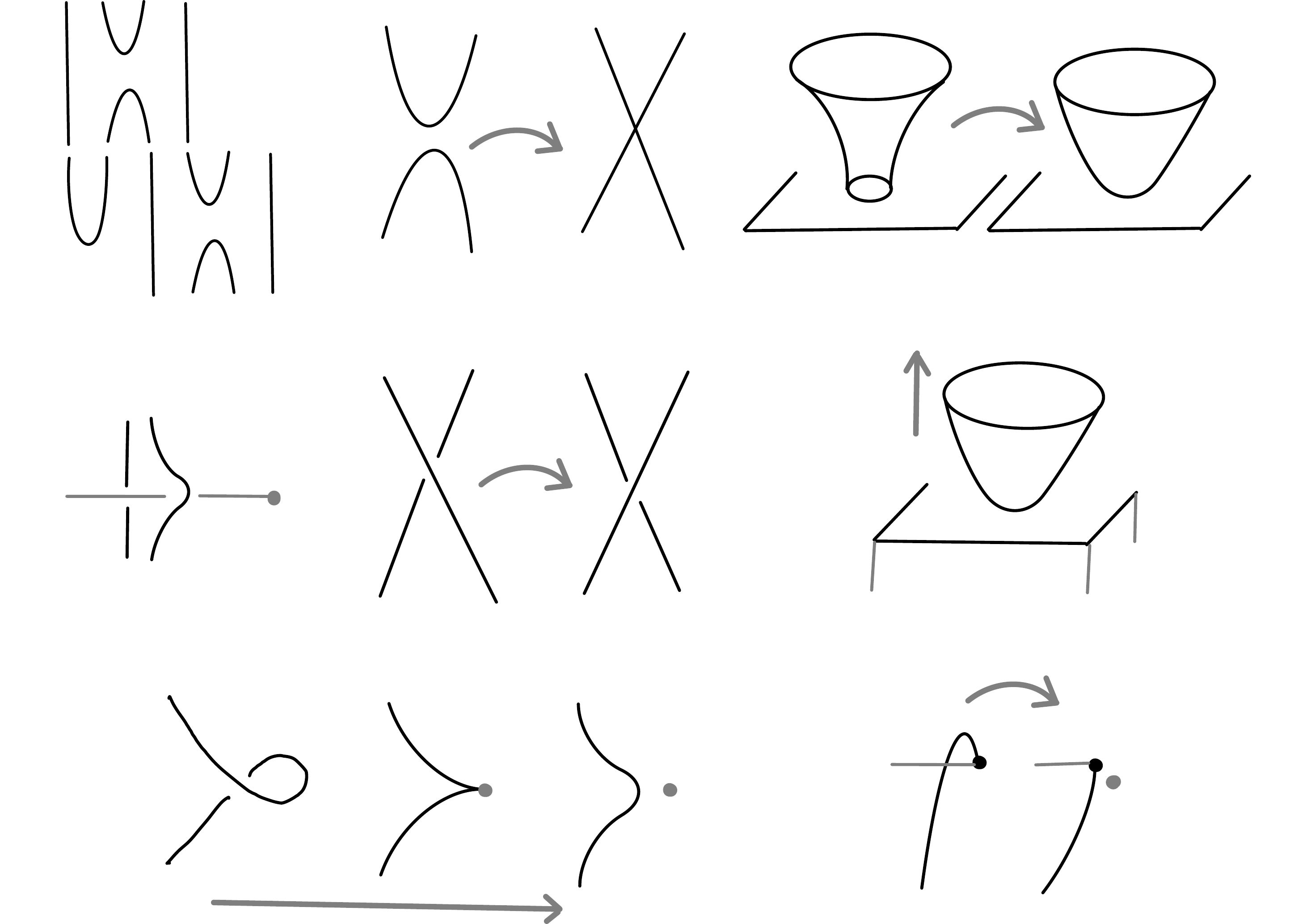}
	\caption{Degenerations in Lemma \ref{l:degenrations}. Top row: $(1),(2),(3)$, middle $(4)$ (the dot is $\kappa_{1}$), $(5),(6)$, bottom $(7),(8)$ together, and $(9)$ (gray dot represents $u_{J\nu}\cap C_{K}$).}
	\label{fig:degenerations}
\end{figure}

\begin{prp}\label{prp:cancel}
Boundaries of 1-dimensional strata of generalized holomorphic curves cancel out according to the following.	
\begin{itemize}
\item[$(i)$] The moduli space of generalized holomorphic curves does not change under degenerations $(4)$ and $(9)$.
\item[$(ii)$] Boundary splitting $(2)$ cancel with boundary crossing $(5)$.
\item[$(iii)$] Elliptic splitting $(3)$ cancel with interior crossing $(6)$.
\item[$(iv)$] Boundary kinks $(7)$ cancel interior kinks $(8)$.
\end{itemize}
\end{prp}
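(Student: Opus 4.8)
The plan is to analyze the boundary of each one-dimensional stratum of the moduli space of generalized holomorphic curves and to show that the nine degeneration types listed in Lemma~\ref{l:degenrations} organize into the cancelling pairs asserted in $(i)$--$(iv)$. The overall strategy mirrors the disk case: we want to show that all boundary phenomena taking place in the compact part of $(X,L_{K})$ cancel in pairs, so that the only surviving contributions to the derivative of $\mathbf{F}_{K}$ come from splitting at Reeb chords $(1)$ and from insertions along bounding chains at infinity. Concretely, for each simple graph $\Gamma$ and each way of distributing a one-dimensional curve at a single vertex, I would compactify the corresponding one-manifold of generalized curves and compute its boundary points with signs.

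For $(i)$: a crossing of the stable manifold of $\kappa_{1}$, degeneration $(4)$, changes $\partial^{\infty}\sigma'_{u}$ by $\pm p$, which is exactly compensated by the correction term $-m\cdot W^{\mathrm u}(\kappa_{1})$ in \eqref{eq:defboundingchain1}; so the bounding chain $\sigma_{u}$, hence the generalized-curve data and its weight, varies continuously through such a moment, and $(4)$ is not a genuine boundary point. Similarly, $(9)$, the vanishing of a leading Fourier coefficient at a positive puncture, is the SFT-analog of a puncture moving to the boundary but for an asymptotic marker; it corresponds to a reparameterization phenomenon and does not bound the moduli space — one checks that the count of configurations is locally constant across it. I would spell these out via the explicit local models for the shifted chain near $\kappa_{1}$ and near a Reeb chord endpoint (using the perturbation of $f$ and $C_{K}$ described near those endpoints).

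For $(ii)$: when the one-dimensional curve $u_{v}$ undergoes hyperbolic boundary splitting $(2)$ into $u'\cup u''$ joined at a boundary point, the limit configuration is a generalized curve over the graph obtained from $\Gamma$ by replacing $v$ with two vertices carrying $u'$, $u''$ and adding a new edge recording the shared boundary point. This is precisely the same limit reached when, in a neighboring one-parameter family, a boundary intersection point of $\partial u_{v}$ with some $\sigma_{u_{w}}$ (or with the self-bounding-chain $\sigma_{u_{v}}$, using $\partial u_{\nu}$) reaches the boundary of that bounding chain — degeneration $(5)$ — because $\partial\sigma_{u_{w}}=\partial u_{w}$. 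Matching the two descriptions at the common limit, with orientations, gives the cancellation; the sign bookkeeping is where one must be careful, and it is handled by comparing the coorientation of the splitting locus with the intersection sign of the edge. The pair $(iv)$, boundary kink $(7)$ versus interior kink $(8)$, is cancelled by the analogous argument: a tangency of $\partial u_{v}$ to $\nabla f$ creates or destroys a pair of flow lines contributing to $\sigma'_{u_{v}}$, while a marked point on $C_{K}$ simultaneously reaches $L_{K}$; the chain $C_{K}$ was built precisely so that its inward normal is $\pm J\nabla f$, which ties these two events together. For $(iii)$: elliptic boundary splitting $(3)$ produces a disk bubble on the boundary of $u_{v}$; such a bubble is a closed disk with no positive puncture, and the compensating event is an interior marked point of $u_{v}$ mapping to $C_{K}$ crossing $L_{K}=\partial C_{K}$ — degeneration $(6)$ — which is governed again by $\partial C_{K}=2\cdot L_{K}$ and the factor-$\tfrac12$ weights on edges.

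The main obstacle I anticipate is the sign analysis in $(ii)$ and $(iv)$: verifying that the algebraic count at each cancelling pair of boundary strata agrees up to an overall sign requires a consistent choice of orientations on the moduli spaces of holomorphic curves, on the chains $\sigma_{u}$ and $C_{K}$, and on the graph edges, and then checking that the coorientations of the two strata in the compactified one-manifold are opposite. The construction of $C_{K}$ with prescribed inward normal $\pm J\nabla f$ and the use of the shifting vector field $\nu$ are exactly the devices that make this work, but turning "exactly the devices" into a signed identity is the technical heart; I would reduce it, as in \cite{EN}, to a local computation near each degeneration locus and invoke the already-constructed coherent orientations from the perturbation scheme. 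The remaining degenerations of type $(1)$ and the insertions at infinity are then left over and assemble into the master equation relating $\mathbf{F}_{K}$ to the Reeb-chord differential, which is the content of the subsequent results.
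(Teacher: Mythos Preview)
Your treatment of $(i)$ for degeneration $(4)$ is essentially the paper's argument. Your treatment of $(9)$ is not: it is \emph{not} a mere reparameterization, and the count is not simply locally constant across it. In the paper, the local Fourier expansion at the puncture shows that an intersection of $\partial u_{\nu}$ with the boundary of the capping disk is traded for an intersection of $u_{J\nu}$ with $C_{K}$; the two contributions to $\slk_{v}$ swap, so the total weight is unchanged. Your description of $(iv)$ is also slightly off: degeneration $(8)$ is a $C_{K}$-marked point moving to the boundary of the \emph{domain}, not to $L_{K}=\partial C_{K}$; the cancellation is that a boundary tangency with $\nabla f$ converts a self-intersection $\partial u_{\nu}\cap\sigma_{u}$ into an intersection $u_{J\nu}\cap C_{K}$, again leaving $\slk_{v}$ unchanged.

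The serious gap is in $(ii)$ and $(iii)$. You describe the cancellation as two one-parameter families sharing a common limit configuration and then reduce the problem to a sign check. That picture is adequate for the disk potential but fails here, because generalized curves carry the exponentiated linking weight $e^{\frac12\slk_{v}g_{s}}$ and allow arbitrarily many edges at the same intersection point. At a hyperbolic boundary node the curve resolves in two ways $u_{+}$, $u_{-}$ whose self-linking numbers differ by one; the contribution of the corresponding vertex therefore jumps by the multiplicative factor $e^{\frac12 g_{s}}-e^{-\frac12 g_{s}}$, not by a sign. What compensates this is not a single boundary-crossing end but an infinite family of glued configurations: to achieve transversality for all Euler characteristics one time-orders the $m$ coincident crossings via multiple bounding chains, and gluing at $m$ ordered points contributes $\epsilon^{m}\frac{1}{2^{m}m!}g_{s}^{m}$ (the $\tfrac12$ from the edge weight, the $m!$ from the ordering permutation). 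Summing over $m$ for the two resolutions yields exactly $e^{\frac12 g_{s}}-e^{-\frac12 g_{s}}$, cancelling the jump in the linking factor. The elliptic case $(iii)$ is parallel, with the $\tfrac12$ now coming from $\partial C_{K}=2\cdot L_{K}$. Without this gluing-and-resumming mechanism, which is the genuine content of the proof and the reason the factor \eqref{eq:magicfactor} appears, the cancellation you assert does not hold; a naive limit-matching argument would only recover the $m=1$ term.
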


\begin{proof}
Consider $(i)$. For $(4)$, observe that as the boundary crosses the stable manifold of $\kappa_{1}$, the change in flow image is compensated by the change in the number of unstable manifold added. The invariance under $(9)$ follows from a straightforward calculation using Fourier expansion near the Reeb chord: an intersection with the capping disk boundary turns into an intersection with $C_{K}$.

Consider $(iv)$. A calculation in a local model for a generic tangency with $\nabla f$ shows that the self intersection of the boundary turns into an intersection with $C_{K}$. (This uses that the normal vector field of $C_{K}$ is $\pm J\nabla f$.)

Consider $(iii)$. Unlike $(i)$ and $(iv)$ this involves gluing holomorphic curves and therefore, as we  will see, the details of the perturbation scheme (which also has further applications, see \cite{ES}).  

At the hyperbolic boundary splitting we find a holomorphic curve with a double point that can be resolved in two ways, $u_+$ and $u_-$. Consider the two moduli spaces corresponding to $m$ insertions at the corresponding intersection points between $\partial u_+$ and $\sigma_{u_-}$ and $\partial u_-$ and $\sigma_{u_{+}}$. 

To obtain transversality at this singular curve for curves of any Euler characteristic we must separate the intersection points with the bounding chain. To this end, we use a perturbation scheme with multiple bounding chains that time-orders the boundary crossings. Each, now distinct, crossing can then be treated as a usual gluing. Consider gluing at $m$ intersection points as $\partial u_{-}$ crosses $\sigma_{u_+}$. This gives a curve of Euler characteristic decreased by $m$ and orientation sign $\epsilon^{m}$, $\epsilon=\pm 1$. Furthermore, at the gluing, the ordering permutation acts on the gluing strips and each intersection point is weighted by $\frac12$. (The reason for the factor $\frac12$ is that we count intersections between boundaries and bounding chains twice, for distinct curves both $\partial u\cap \sigma_v$ and $\partial v\cap \sigma_u$ contribute.) This gives a moduli space of additional weight
\[
\epsilon^{m}\frac{1}{2^{m}m!} g_{s}^{m}.
\]  
The only difference between these configurations and those associated with the opposite crossing is the orientation sign. Hence the other gluing when $\partial u_{+}$ crosses $\sigma_{u_{-}}$ gives the weight
\[
(-1)^{m}\epsilon^{m}\frac{1}{2^{m}m!} g_{s}^{m}.
\]

Noting that the original moduli space is oriented towards the crossing for one configuration and away from it for the other we find that the two gluings cancel if $m$ is even and give a new curve of Euler characteristic decreased by $m$ and of weight $\frac{2}{2^{m}m!}$ if $m$ is odd. Counting ends of moduli spaces we find that the curves resulting from gluing at the crossing count with a factor
\begin{equation}\label{eq:magicfactor}
e^{\frac12 g_{s}}-e^{-\frac12 g_{s}},
\end{equation}
which cancels the change in linking number.

Cancellation $(iii)$ follows from a gluing argument analogous to $(ii)$: 
The curve with an interior point mapping to $L_K$ can be resolved in two ways, one curve $u^+$ that intersects $C_{K}$ at a point in the direction $+J\nabla f$ and one $u^-$ that intersects $C_{K}$ at a point in the direction $-J\nabla f$. A constant disk at the intersection point can be glued to the family of curves at the intersection with $L_{K}$. As in the hyperbolic case we separate the intersections and time order them to get transversality at any Euler characteristic. We then apply usual gluing and note that the intersection sign is part of the orientation data for the gluing problem, the calculation of weights is exactly as in the hyperbolic case above. (This time the $\frac12$-factors comes from the boundary of $C_K$ being twice $L_K$, $\partial C_{K}=2[L_K]$.) We find again that glued configurations corresponds to multiplication by
\[ 
e^{\frac12 g_{s}}-e^{-\frac12 g_{s}},
\]
and cancels the difference in counts between $u^{+}_{J\nu}\cdot C_{K}$ and $u^{-}_{J\nu}\cdot C_{K}$. 
\end{proof}

\subsection{The SFT equation}
We let $\mathbf{H}_{K}$ denote the count of generalized holomorphic curves $\Gamma_{\mathbf{u}}$, in $\R\times S T^{\ast}S^{3}$, rigid up to $\R$-translation. Such a generalized curve lies over a graph that has a main vertex corresponding to a curve of dimension 1, at all other vertices there are trivial Reeb chord strips. Consider such a generalized holomorphic curve $\Gamma_{\mathbf{u}}$. We write $\mathbf{c}^{+}(\mathbf{u})$ and $\mathbf{c}^{-}(\mathbf{u})$ for the monomials of positive and negative punctures of $\Gamma_{\mathbf{u}}$, write $w(\mathbf{u})$ for the weight of $\Gamma_{\mathbf{u}}$, $m(\mathbf{u})x+n(\mathbf{u})p+l(\mathbf{u})t$ for its homology class, and $\chi(\mathbf{u})$ for the Euler characteristic of the generalized curve of $\Gamma_{\mathbf{u}}$. Define the SFT-Hamiltonian
\[
\mathbf{H}_{K}=\sum_{\dim(\Gamma_{\mathbf{u}})=1} w(\mathbf{u})\;  g_{s}^{-\chi(\mathbf{u})+\ell(\mathbf{c}^{+}(\mathbf{u}))}\; e^{m(\mathbf{u})x+n(\mathbf{u})p+l(\mathbf{u})t}\;\mathbf{c}^{+}(\mathbf{u})\,\partial_{\mathbf{c}^{-}(\mathbf{u})},  
\]
where the sum ranges over all generalized holomorphic curves. As above this formula can be simplified to a sum over simpler graphs with more elaborate weights on edges. 

\begin{lma}\label{l:quantization}
	Consider a curve $u$ at infinity in class $mx+np+kt$. The count of the corresponding generalized curves with insertion along $\partial u$ equals
	\[ 
	e^{-\mathbf{F}_{K}} e^{mx}Q^{k} e^{ng_{s}\frac{\partial}{\partial x}}e^{\mathbf{F}_{K}}.
	\]
\end{lma}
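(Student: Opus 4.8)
The plan is to compute the left-hand side by a cluster expansion over generalized holomorphic curves with fixed main vertex $u$: I will organize all the boundary insertions on $u$ into a single multiplicative factor and then recognize that factor, together with the monomial of the bare curve $u$, as $e^{-\mathbf{F}_K}e^{mx}Q^{k}e^{n g_{s}\partial_{x}}e^{\mathbf{F}_K}$.

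First I set notation and the combinatorial skeleton. Write $\mathbf{F}_K=\sum_{\nu}\phi_{\nu}$, where $\nu$ runs over connected rigid generalized curves in $(X,L_K)$ and $\phi_{\nu}=w_{\nu}\,g_{s}^{-\chi(\nu)+\ell(\mathbf{c}^{+}_{\nu})}\,e^{m_{\nu}x}\,Q^{k_{\nu}}\,\mathbf{c}^{+}_{\nu}$ records the weight, Euler characteristic, homology class $m_{\nu}x+k_{\nu}t\in H_{2}(X,L_K)$, and positive-puncture word of $\nu$; since $\mathbf{F}_K$ counts connected generalized curves, $e^{\mathbf{F}_K}$ counts all (possibly disconnected) collections of them. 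A generalized curve whose unique $1$-dimensional vertex is $u$ is obtained from $u$ by attaching a finite multiset of such $\nu$'s, each joined to $u$ by one or more edges along $\partial u$; edges internal to an attached $\nu$ and self-edges of an attached $\nu$ are already recorded in $\mathbf{F}_K$, while the self-edges of $u$ only produce a fixed $\slk_{u}$-factor that I absorb into the weight of $u$. Because every connected component of the configuration other than $u$ is a connected rigid generalized curve attached to $u$, the exponential formula writes the count with insertions along $\partial u$ as the bare contribution of $u$ times $\exp$ of the generating function of a single attached component.

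Next I evaluate that generating function and assemble. By the simple-graph resummation of the Remark following the definition of $\mathbf{F}_K$, attaching one copy of $\nu$ to $u$ by at least one edge contributes $\phi_{\nu}\bigl(e^{\lk(\nu,u)\,g_{s}}-1\bigr)$, where $\lk(\nu,u)=\partial u\cdot\sigma_{\nu}$ is the algebraic intersection number at infinity. At infinity $\partial u$ represents $m$ times the longitude plus $n$ times the meridian of $\Lambda_K$, while by the construction of bounding chains in Section \ref{sec:boundingchains} the curve $\partial^{\infty}\sigma_{\nu}$ represents $m_{\nu}$ times the longitude and $0$ times the meridian; hence $\lk(\nu,u)=n\,m_{\nu}$ for the orientations fixed in the paper. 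Moreover attaching $\nu$ adds $[\nu]=m_{\nu}x+k_{\nu}t$ to the total homology class and appends $\mathbf{c}^{+}_{\nu}$ to the positive punctures, i.e.\ exactly the data carried by $\phi_{\nu}$. Using the elementary identity $\phi_{\nu}(x)\,e^{n m_{\nu}g_{s}}=\phi_{\nu}(x+n g_{s})$, the exponent sums to
\[
\sum_{\nu}\phi_{\nu}\bigl(e^{n m_{\nu}g_{s}}-1\bigr)=\mathbf{F}_K(x+n g_{s})-\mathbf{F}_K(x).
\]
The bare curve $u$, of class $mx+np+kt$, contributes the monomial $e^{mx}Q^{k}e^{np}$, which under the quantization $e^{p}=e^{g_{s}\partial_{x}}$ is the normal-ordered operator $e^{mx}Q^{k}e^{n g_{s}\partial_{x}}$ (the weight, $g_{s}$-power and Reeb-chord operators of $u$ riding along as spectators). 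Since $e^{\mathbf{F}_K(x+n g_{s})-\mathbf{F}_K(x)}$ is a multiplication operator it commutes with $e^{mx}Q^{k}$, and the operator identity $e^{\mathbf{F}_K(x+n g_{s})-\mathbf{F}_K(x)}\,e^{n g_{s}\partial_{x}}=e^{-\mathbf{F}_K}e^{n g_{s}\partial_{x}}e^{\mathbf{F}_K}$ then rewrites the product of the insertion factor with the bare operator as $e^{-\mathbf{F}_K}e^{mx}Q^{k}e^{n g_{s}\partial_{x}}e^{\mathbf{F}_K}$, which is the claim.

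The main obstacle — really the main content — is the combinatorial bookkeeping behind the two uses of resummation above: one must check that the symmetry factors (the $1/r!$ for $r$ mutually isomorphic attached components, together with the automorphisms of the $\nu$ already built into $\mathbf{F}_K$) and the $\pm\tfrac12\,g_{s}$ weight of each edge conspire so that the naive exponential formula and the multi-edge resummation to $e^{\lk\,g_{s}}$ apply verbatim; that the count itself is finite is part of the construction in Section \ref{sec:gencurves}, which I take as given. Granting this, the homology computation $\lk(\nu,u)=n m_{\nu}$, the identity $\phi_{\nu}(x)e^{n m_{\nu}g_{s}}=\phi_{\nu}(x+n g_{s})$, and the final operator identity are all routine.
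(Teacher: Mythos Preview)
Your argument is correct and arrives at the same formula, but the packaging differs from the paper's. The paper's proof is a one-line sketch that expands $e^{ng_{s}\partial_{x}}$ term by term: it identifies the contribution of exactly $r$ insertions along $\partial u$ with
\[
n^{r}\frac{1}{r!}\,g_{s}^{r}\sum_{r_{1}+\dots+r_{j}=r}
\frac{\partial^{r_{1}} \mathbf{F}_{K}}{\partial x^{r_{1}}}\cdots\frac{\partial^{r_{j}} \mathbf{F}_{K}}{\partial x^{r_{j}}},
\]
i.e.\ the Fa\`a di Bruno expansion of $e^{-\mathbf{F}_{K}}\partial_{x}^{r}e^{\mathbf{F}_{K}}$, where a factor $\partial_{x}^{s}\mathbf{F}_{K}$ records a single connected rigid curve attached $s$ times. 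Your route instead first resums over all edges via the linking-number Remark to get the closed form $e^{\mathbf{F}_{K}(x+ng_{s})-\mathbf{F}_{K}(x)}$, and then invokes the shift-operator identity $e^{-\mathbf{F}_{K}}e^{ng_{s}\partial_{x}}e^{\mathbf{F}_{K}}=e^{\mathbf{F}_{K}(x+ng_{s})-\mathbf{F}_{K}(x)}e^{ng_{s}\partial_{x}}$. The two computations are of course equivalent: your exponential-formula step plus the identity $\phi_{\nu}(x)e^{nm_{\nu}g_{s}}=\phi_{\nu}(x+ng_{s})$ is exactly the resummation of the paper's Taylor coefficients. Your version makes the role of the homological input $\lk(\nu,u)=nm_{\nu}$ (coming from $\partial^{\infty}\sigma_{\nu}\in m_{\nu}x$) more explicit, at the cost of leaning on the Remark's edge-resummation, which you correctly flag as the place where the symmetry factors and $\tfrac12 g_{s}$ edge weights must be checked.
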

\begin{proof}
	Contributions from bounding chains of curves inserted $r$ times along $np$ corresponds to multiplication by
	\[ 
	n^{r}\frac{1}{r!}g_{s}^{-r}\sum_{r_{1}+\dots+r_{j}=r}
	\frac{\partial^{r_{1}} \mathbf{F}_{K}}{\partial x^{r_{1}}}
	\dots
	\frac{\partial^{r_{j}} \mathbf{F}_{K}}{\partial x^{r_{j}}},
	\]	
	where a factor
	$\frac{\partial^{s} \mathbf{F}_{K}}{\partial x^{s}}$ corresponds to attaching the bounding chain of a curve $s$ times.
\end{proof}

\begin{thm}\label{t:sftmastereq}
	If $K$ is a knot and $L_{K}\subset X$ its conormal Lagrangian then the SFT equation
	\begin{equation}\label{eq:sft}
	e^{-\mathbf{F}_{K}} \ \mathbf{H}_{K}|_{p=g_s\frac{\partial}{\partial x}} \ e^{\mathbf{F}_{K}} =0
	\end{equation}
	holds.
\end{thm}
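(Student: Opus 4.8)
The plan is to derive \eqref{eq:sft} from the vanishing of the boundary count of one-dimensional moduli spaces of generalized holomorphic curves in $(X,L_{K})$, in exact analogy with the equation $\epsilon\circ d=0$ of Section \ref{Sec:aug+disk} but now keeping all Euler characteristics and tracking the conjugation by $e^{\mathbf{F}_{K}}$. First I would fix a positive Reeb chord word $\mathbf{c}^{+}$ and a relative homology class $mx+kt\in H_{2}(X,L_{K})$ and consider the space $\mathcal{N}$ of generalized holomorphic curves $\Gamma_{\mathbf{u}}$ in $(X,L_{K})$ with positive punctures forming $\mathbf{c}^{+}$, in class $mx+kt$, and with $\dim(\Gamma_{\mathbf{u}})=1$, so that exactly one vertex carries a genuine one-parameter family and all other vertices carry rigid curves. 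By the inductive perturbation scheme and the SFT version of Gromov compactness (see \cite{EN}), $\mathcal{N}$ compactifies to a compact oriented $1$-manifold with boundary, so the signed count of $\partial\mathcal{N}$ is zero; the whole argument consists in identifying the terms in this count.

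Next I would enumerate $\partial\mathcal{N}$ using Lemma \ref{l:degenrations} and cancel terms using Proposition \ref{prp:cancel}: degenerations $(2)$ and $(5)$, $(3)$ and $(6)$, and $(7)$ and $(8)$ cancel in pairs, while $(4)$ and $(9)$ do not change the moduli space, the crucial point being that the gluing at the time-ordered, separated boundary and interior crossings produces the factors $e^{\frac12 g_{s}}-e^{-\frac12 g_{s}}$ that precisely absorb the jumps in the linking and self-linking numbers, leaving nothing over in the compact part of $(X,L_{K})$. Hence the only boundary strata that survive are the degenerations $(1)$: splitting at Reeb chords, in which a generalized curve in the symplectization $\R\times ST^{\ast}S^{3}$, rigid up to $\R$-translation, splits off at infinity and is glued along its negative punctures to rigid generalized curves in $(X,L_{K})$, together with the insertions of bounding chains that wind around the meridian class $p$ at infinity.

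Finally I would translate this boundary identity into the operator equation. A split-off symplectization level contributes a term of $\mathbf{H}_{K}$; capping each of its negative Reeb chords by a (possibly disconnected) rigid generalized curve in $(X,L_{K})$ amounts, by the chain rule, to replacing $\partial_{\mathbf{c}^{-}(\mathbf{u})}$ by the corresponding multi-derivative of $e^{\mathbf{F}_{K}}$, so that summing over all cappings yields $\mathbf{H}_{K}\,e^{\mathbf{F}_{K}}$ up to the overall factor $e^{\mathbf{F}_{K}}$ that is stripped off by the prefactor $e^{-\mathbf{F}_{K}}$; this is the standard connected/disconnected bookkeeping and it is compatible with the conventions $\chi(\Gamma_{\mathbf{u}})=\sum_{v}\chi(u_{v})-\#E_{\Gamma}$, the weight $g_{s}^{-\chi+\ell(\mathbf{c}^{+})}$, and the $\pm\frac12$ attached to each edge. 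The bounding chains that wind $n$ times around the meridian on a level in a class of the form $\cdots+np+\cdots$ are accounted for, by Lemma \ref{l:quantization}, by the substitution $e^{np}\mapsto e^{ng_{s}\partial/\partial x}$ together with conjugation by $e^{\mathbf{F}_{K}}$, which is exactly $\mathbf{H}_{K}|_{p=g_{s}\partial_{x}}$ after conjugation. Matching the contributions with fixed positive-puncture word, homology class, and Euler characteristic on the two sides, the vanishing of the signed count of $\partial\mathcal{N}$ is precisely \eqref{eq:sft}.

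I expect the main obstacle to be the bookkeeping in this last step: checking that the generating-function identity is exact, i.e.\ that every surviving boundary configuration is counted once, with the correct sign and the correct power of $g_{s}$ on both sides, and in particular that the interplay between the substitution $p=g_{s}\partial_{x}$ (Lemma \ref{l:quantization}) and the chain-rule expansion of $e^{\mathbf{F}_{K}}$ reproduces the meridian-wrapping insertions at infinity with neither over- nor under-counting. The geometric input — completeness of the degeneration list of Lemma \ref{l:degenrations}, the cancellations of Proposition \ref{prp:cancel} with their $e^{\pm g_{s}/2}$ factors, and the transversality and gluing package for generalized curves of arbitrary Euler characteristic — is the technical heart of the matter, but it is already in place by the results quoted above; what remains here is the combinatorial assembly of the surviving boundary terms into the conjugated operator.
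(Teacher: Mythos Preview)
Your proposal is correct and follows essentially the same approach as the paper: the paper's proof is a one-sentence invocation of Lemma \ref{l:degenrations}, Proposition \ref{prp:cancel}, and Lemma \ref{l:quantization} to identify the left-hand side of \eqref{eq:sft} with the signed count of ends of a compact oriented 1-dimensional moduli space, and your write-up is a faithful unpacking of exactly that argument. Your identification of the combinatorial bookkeeping in the last step as the part requiring care is apt, though the paper defers this to \cite{EN} rather than carrying it out.
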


\begin{proof}
	Lemma \ref{l:degenrations}, Proposition \ref{prp:cancel}, and Lemma \ref{l:quantization} show that the terms in left hand side of \eqref{eq:sft} counts the ends of a compact oriented 1-dimensional moduli space.
\end{proof}    

\begin{rmk}
	We point out that Lemma \ref{l:quantization} gives an enumerative geometrical meaning to the standard quantization scheme $p=g_{s} \frac{\partial}{\partial x}$ by counting insertions of bounding chains. See \cite[Section 3.3]{Ekholmoverview} for a related path integral argument.
\end{rmk}

\subsection{Framing and Gromov-Witten invariants}
Lemma \ref{l:degenrations} and Proposition \ref{prp:cancel} imply that the open Gromov-Witten potential of $L_{K}$ is invariant under deformation. Recall from Section \ref{sec:physics} that dualities between string and gauge theories imply that
\[ 
\Psi_{K}(x,Q)=e^{F_{K}(x,Q)}=\sum_{m} H_{m}(e^{g_{s}},Q)e^{mx},
\]
where $H_{m}$ is the $m$-colored HOMFLY-PT polynomial. It is well-known that the colored HOMFLY-PT polynomial depends on framing. We derive this dependence here using our definition of generalized holomorphic curves. Assume that $\Psi_{K}$ above is defined for a framing $(x,p)$ of $\Lambda_{K}$. Then other framings are given by $(x',p')=(x+rp,p)$ where $r$ is an integer. Let $\Psi_{K}^{r}(x',Q)$ denote the wave function defined using the framing $(x',p')$.  
\begin{thm}\label{t:framing}
	If $\Psi_{K}(x,Q)$ is as above then
	\[ 
	\Psi_{K}^{r}(x',Q)=\sum_{m} H_{m}(e^{g_{s}},Q)\,  e^{m^{2}rg_{s}} e^{mx'}.
	\]
\end{thm}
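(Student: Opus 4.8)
The plan is to isolate the framing-dependent piece of the construction in Section \ref{Sec:SFT} and to show that changing it produces exactly the stated reweighting. The only place the framing $(x,p)$ enters the definitions of $\mathbf{F}_{K}$ and $\Psi_{K}$ is the normalization of the bounding chains: by \eqref{eq:defboundingchain1} a bounding chain $\sigma_{u}$ is obtained from the naive flow-out $\sigma'_{u}$ by subtracting a copy of the unstable disk $W^{\mathrm{u}}(\kappa_{1})$ for each meridian in $\partial_{\infty}\sigma'_{u}$, so that $\partial_{\infty}\sigma_{u}$ lies in the pure longitude class, and $W^{\mathrm{u}}(\kappa_{1})$ meets $\Lambda_{K}$ in the meridian cycle $p$. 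Everything else — the Morse function $f$ and its gradient, the Reeb chords and their capping disks, the $4$-chain $C_{K}$, the perturbation scheme, and the holomorphic curves themselves — is manifestly independent of the framing. So the first step is to record that $\Psi_{K}^{r}$ is the generating function of the very same generalized holomorphic curves that define $\Psi_{K}$, only with the bounding chains renormalized for the framing $(x',p')=(x+rp,p)$.

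Next I would compute this renormalization. For a holomorphic curve $u$ with boundary winding number $n(u)\in\Z\cong H_{1}(L_{K})$, if $\partial_{\infty}\sigma'_{u}$ has class $n(u)x+m(u)p$ in the old framing then it has class $n(u)x'+\bigl(m(u)-r\,n(u)\bigr)p'$ in the new one, so that
\[
\sigma_{u}^{(r)}=\sigma_{u}+r\,n(u)\cdot W^{\mathrm{u}}(\kappa_{1}),
\]
together with a compensating change of $C_{K}$ dictated by the homological identity used in its construction. Since a boundary curve of winding $n(u)$ meets the fibre disk $W^{\mathrm{u}}(\kappa_{1})$ algebraically $n(u)$ times, the linking data of Section \ref{sec:gencurves} transforms by
\[
\lk_{e}\longmapsto\lk_{e}+r\,n(u_{v})\,n(u_{v'}),\qquad
\slk_{v}\longmapsto\slk_{v}+2r\,n(u_{v})^{2},
\]
for an edge between distinct vertices $v,v'$ and for a self-edge at $v$ respectively, the self-case receiving contributions both from $\sigma_{u}$ and from $C_{K}$. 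This is the step where the $\tfrac12$-conventions of Section \ref{sec:gencurves} and the relation $\partial C_{K}=2\,[L_{K}]$ have to be handled with care.

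Then I would resum. In the simple-graph form of $\mathbf{F}_{K}$ a configuration of curves with winding numbers $n_{1},\dots,n_{k}$ contributes to the coefficient of $e^{mx}$ with $m=\sum_{i}n_{i}$, and since in $\Psi_{K}=e^{F_{K}}$ every pair of curves in the configuration is linked, the accumulated framing factor is
\[
\exp\!\Bigl(r\,g_{s}\Bigl[\textstyle\sum_{i<j}n_{i}n_{j}+\sum_{i}n_{i}^{2}\Bigr]\Bigr)=\exp\!\bigl(r\,g_{s}\,m^{2}\bigr),
\]
using $\bigl(\sum_i n_i\bigr)^{2}=\sum_i n_i^{2}+2\sum_{i<j}n_in_j$. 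Equivalently and more conceptually: at infinity the configuration is $m$ parallel strands along the longitude, the bounding chains fix a reference framing for them, and passing to $(x',p')$ adds $r$ full twists, changing the total self-linking of this $m$-cable by the classical amount $r\,m^{2}$; each unit of self-linking is weighted by $e^{g_{s}}$ in the generalized-curve count, which produces the factor above. As the total winding number is also the exponent of $e^{x'}$, the coefficient of $e^{mx'}$ in $\Psi_{K}^{r}$ equals $e^{m^{2}rg_{s}}$ times the coefficient $H_{m}(e^{g_{s}},Q)$ of $e^{mx}$ in $\Psi_{K}$, which is the assertion.

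The main obstacle is the precise constant: checking that the several $\tfrac12$'s in $\lk_{e}$ and $\slk_{v}$, the factor $\partial C_{K}=2[L_{K}]$, and the weight $e^{g_{s}}$ per unit of linking all combine to $m^{2}$ rather than, say, $\tfrac12 m^{2}$. A safe way to organize this is to verify the disk level first, where the framing dependence of the disk potential $W_{K}$ is already visible in examples such as those of Section \ref{sec:ex1}, and then to run the induction over the Euler-characteristic and energy hierarchy of the perturbation scheme, using at each stage that all auxiliary data other than the bounding-chain normalization is framing-independent, so that no further framing-dependent contributions can enter.
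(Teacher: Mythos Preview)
Your strategy is the paper's: the holomorphic curves and all auxiliary data other than the bounding-chain normalization are framing-independent, so the effect of the framing change is the correction $\sigma_u\mapsto\sigma_u+r\,n(u)\,W^{\mathrm{u}}(\kappa_1)$, and one then reads off the resulting shift in linking. The paper does this in one stroke, treating the whole boundary of winding $m$ at once: adding $mr$ copies of the fibre disk shifts the linking by $m\cdot mr=m^{2}r$, which is the claimed exponent. Your ``more conceptual'' sentence about the self-linking of the $m$-cable is exactly this argument.

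Your more granular route through the edge and vertex linkings of $\mathbf{F}_{K}$, however, does not close up as written. The displayed identity
\[
\sum_{i<j}n_{i}n_{j}+\sum_{i}n_{i}^{2}\;=\;m^{2}
\]
is false (take $n_{1}=n_{2}=1$: the left side is $3$, the right side $4$); the identity you cite is $\sum_{i}n_{i}^{2}+2\sum_{i<j}n_{i}n_{j}=m^{2}$, a different expression. Relatedly, the assertion that $C_{K}$ must change with the framing---which you invoke to obtain $\slk_{v}\mapsto\slk_{v}+2r\,n(u_{v})^{2}$---is not in the paper and is not supported by the construction of $C_{K}$, which depends only on $f$, $\Gamma_{K}$, and $J$, none of which sees the choice of longitude. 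With the correct shift $\slk_{v}\mapsto\slk_{v}+r\,n(u_{v})^{2}$ the vertex piece carries a $\tfrac12$, and your sum becomes $\tfrac12 m^{2}$ rather than $m^{2}$. You were right to flag the constant as the delicate point; the clean fix is to run the paper's one-line computation with the total winding-$m$ boundary and its total bounding chain, rather than trying to reassemble $m^{2}$ from the individual vertex and edge contributions in $\mathbf{F}_{K}$.
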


\begin{proof}
	Note first that the actual holomorphic curves are independent of the framing. The change thus comes from the bounding chains: the boundaries at infinity $\partial_{\infty}\sigma_{u}$ must be corrected to lie in multiples of the new preferred class $x'$. Thus, for a curve that goes $m$ times around the generator of $H_{1}(L_{K})$, we must correct the bounding chain adapted to $x$ by adding $mr W^{\rm u}(\kappa_{1})$. Under such a change, the linking number in $L_{K}$ in this class changes by $m^{2}r$.
\end{proof}

\subsection{Quantization of the augmentation variety in basic examples}
\subsubsection{The unknot}
Using Morse flow trees it is easy to see that there are no higher genus curves with boundary on $\Lambda_{U}$. As with the augmentation polynomial, there are no additional operators to eliminate for the unknot and $\mathbf{H}_{U}$ gives the operator equation directly:
\[ 
\qAug_{U}=1-e^{\hat x}-e^{\hat p}-Qe^{\hat x}e^{\hat p},
\]
which agrees with the the recursion relation for the colored HOMFLY-PT, see e.g.~\cite{AV}.

\subsubsection{The trefoil}
It can be shown \cite{EN} that there are no higher genus curves with boundary on $\Lambda_{T}$. The SFT Hamiltonian can again be computed from disks with flow lines attached. If $c$ is a chord with $|c|=1$, we write $H(c)$ for the part of the Hamiltonian $\mathbf{H}_{T}$ with a positive puncture at $c$ and leave out $c$ from the notation. Then relevant parts of the Hamiltonian are:
\begin{align*}
H(b_{12}) &= e^{-\hat x} \partial_{a_{12}} - \partial_{a_{21}} +\mathcal{O}(a)\\
H(c_{11}) &= 
e^{\hat x}e^{\hat p} - e^{-g_{s}}e^{\hat x} -((1+e^{-g_{s}})Q-e^{\hat p}) \partial_{a_{12}} - Q \partial^2_{a_{12}} \partial_{a_{21}} + \mathcal{O}(a) \\
H(c_{21}) &= Q - e^{\hat p} + e^{\hat x}e^{\hat p} \partial_{a_{21}} 
+ Q \partial_{a_{12}}\partial_{a_{21}} +  (e^{-g_{s}}-1) e^{\hat x} a_{12}  \\
& \qquad+ (e^{-g_{s}}-1) Q a_{12} \partial_{a_{12}}
+ \mathcal{O}(a^2) \\
H(c_{22}) &= e^{\hat p}-1-Q \partial_{a_{21}}+e^{\hat p} \partial_{a_{12}} \partial_{a_{21}} +(e^{g_{s}}-1) Q a_{12} \\
&\qquad + (e^{g_{s}}-1) e^{\hat p} a_{12} \partial_{a_{12}} + \mathcal{O}(a^2),
\end{align*}
where $\mathcal{O}(a)$ represents order in the variables $a=(a_{12},a_{21})$. The factors $(e^{g_{s}}-1)$ in front of disks with additional positive punctures comes from the perturbation scheme and are related to the gluing analysis in the proof of Proposition \ref{prp:cancel}, see \cite{EN}. In close analogy with the calculation at the classical level, the operators $\partial_{a_{12}}$ and $\partial_{a_{21}}$ can be eliminated and we get an operator equation which after change of framing to make $x$ correspond to the longitude of $T$, i.e., 0-framing, becomes
\begin{align*}
\qAug_T&= 
e^{g_{s}} Q^3 e^{3\hat p}(Q-e^{-3g_{s}}e^{2\hat p})(Q-e^{-g_{s}}e^{\hat p})\cdot 1 \\
&\quad + e^{-5g_{s}/2}(Q-e^{-2g_{s}}e^{2\hat p})\left((e^{2g_{s}} e^{2\hat p}+e^{3g_{s}} e^{2\hat p}-e^{3g_{s}} e^{\hat p}+e^{4g_{s}})Q^2 \right.\\
&\qquad\qquad\qquad\qquad\qquad\qquad\; 
\left.-(e^{g_{s}}e^{3\hat p}+e^{3g_{s}}e^{2\hat p}+e^{g_{s}}e^{2\hat p})Q+e^{4\hat p}\right)\cdot e^{\hat x} \\
&\quad +(Q-e^{-g_{s}} e^{2\hat p})(e^{\hat p}-e^{g_{s}})\cdot e^{2\hat x},
\end{align*}
in agreement with the recursion relation of the colored HOMFLY-PT in \cite{Garoufalidis}.

\bibliographystyle{hplain}
\bibliography{myrefsqc}

\begin{thebibliography}{10}

\bibitem{AENV}
Mina Aganagic, Tobias Ekholm, Lenhard Ng, and Cumrun Vafa.
\newblock Topological strings, {D}-model, and knot contact homology.
\newblock {\em Adv. Theor. Math. Phys.}, 18(4):827--956, 2014.

\bibitem{AV}
Mina Aganagic and Cumrun Vafa.
\newblock {Large N Duality, Mirror Symmetry, and a Q-deformed A-polynomial for
  Knots}.
\newblock 2012, arXiv:1204.4709.

\bibitem{CELN}
Kai Cieliebak, Tobias Ekholm, Janko Latschev, and Lenhard Ng.
\newblock Knot contact homology, string topology, and the cord algebra.
\newblock {\em J. \'Ec. polytech. Math.}, 4:661--780, 2017.

\bibitem{shade}
T.~Ekholm.
\newblock The complex shade of a real space, and its applications.
\newblock {\em Algebra i Analiz}, 14(2):56--91, 2002.

\bibitem{E}
Tobias Ekholm.
\newblock Morse flow trees and {L}egendrian contact homology in 1-jet spaces.
\newblock {\em Geom. Topol.}, 11:1083--1224, 2007, math.SG/0509386.

\bibitem{Ekholmoverview}
Tobias Ekholm.
\newblock Notes on topological strings and knot contact homology.
\newblock In {\em Proceedings of the {G}\"okova {G}eometry-{T}opology
  {C}onference 2013}, pages 1--32. G\"okova Geometry/Topology Conference (GGT),
  G\"okova, 2014.

\bibitem{EENStransverse}
Tobias Ekholm, John Etnyre, Lenhard Ng, and Michael Sullivan.
\newblock Filtrations on the knot contact homology of transverse knots.
\newblock {\em Math. Ann.}, 355(4):1561--1591, 2013.

\bibitem{EES}
Tobias Ekholm, John Etnyre, and Michael Sullivan.
\newblock Legendrian contact homology in {$P\times\Bbb R$}.
\newblock {\em Trans. Amer. Math. Soc.}, 359(7):3301--3335 (electronic), 2007,
  math/0505451.

\bibitem{EENS}
Tobias Ekholm, John~B. Etnyre, Lenhard Ng, and Michael~G. Sullivan.
\newblock Knot contact homology.
\newblock {\em Geom. Topol.}, 17(2):975--1112, 2013.

\bibitem{EHK}
Tobias Ekholm, Ko~Honda, and Tam\'as K\'alm\'an.
\newblock Legendrian knots and exact {L}agrangian cobordisms.
\newblock {\em J. Eur. Math. Soc. (JEMS)}, 18(11):2627--2689, 2016.

\bibitem{EL}
Tobias Ekholm and Yanki Lekili.
\newblock {Duality between Lagrangian and Legendrian invariants}.
\newblock 2017, arXiv:1701.01284.

\bibitem{EN}
Tobias Ekholm and Lenhard Ng.
\newblock Higher genus knot contact homology and recursion for colored
  {HOMFLY-PT} polynomials.
\newblock In preparation.

\bibitem{ENS}
Tobias Ekholm, Lenhard Ng, and Vivek Shende.
\newblock A complete knot invariant from contact homology.
\newblock {\em Inventiones mathematicae}, Oct 2017.

\bibitem{ENSarxiv}
Tobias Ekholm, Lenhard Ng, and Vivek Shende.
\newblock A complete knot invariant from contact homology.
\newblock 2017, arXiv:1606.07050.

\bibitem{ES}
Tobias Ekholm and Vivek Shende.
\newblock Holomorphic curves and the skein relation.
\newblock In preparation.

\bibitem{EGH}
Y.~Eliashberg, A.~Givental, and H.~Hofer.
\newblock Introduction to symplectic field theory.
\newblock {\em Geom. Funct. Anal.}, (Special Volume, Part II):560--673, 2000,
  math.SG/0010059.
\newblock GAFA 2000 (Tel Aviv, 1999).

\bibitem{yasha}
Yakov Eliashberg.
\newblock Symplectic field theory and its applications.
\newblock In {\em International {C}ongress of {M}athematicians. {V}ol. {I}},
  pages 217--246. Eur. Math. Soc., Z\"urich, 2007.

\bibitem{FO3}
Kenji Fukaya, Yong-Geun Oh, Hiroshi Ohta, and Kaoru Ono.
\newblock {\em Lagrangian intersection {F}loer theory: anomaly and obstruction.
  {P}art {I}}, volume~46 of {\em AMS/IP Studies in Advanced Mathematics}.
\newblock American Mathematical Society, Providence, RI, 2009.

\bibitem{GPS}
Sheel Ganatra, John Pardon, and Vivek Shende.
\newblock {Covariantly functorial Floer theory on Liouville sectors}.
\newblock 2017, arXiv:1706.03152.

\bibitem{Garoufalidis}
Stavros Garoufalidis, Aaron~D. Lauda, and Thang~T.Q. Le.
\newblock {The colored HOMFLY polynomial is $q$-holonomic}.
\newblock 2016, arXiv:1604.08502.

\bibitem{iacovino1}
Vito Iacovino.
\newblock {Open Gromov-Witten theory on Calabi-Yau three-folds I}.
\newblock 2009, arXiv:0907.5225.

\bibitem{iacovino2}
Vito Iacovino.
\newblock {Open Gromov-Witten theory on Calabi-Yau three-folds II}.
\newblock 2009, arXiv:0908.0393.

\bibitem{koshkin}
Sergiy Koshkin.
\newblock Conormal bundles to knots and the {G}opakumar-{V}afa conjecture.
\newblock {\em Adv. Theor. Math. Phys.}, 11(4):591--634, 2007.

\bibitem{Ngframed}
Lenhard Ng.
\newblock Framed knot contact homology.
\newblock {\em Duke Math. J.}, 141(2):365--406, 2008, math/0407071.

\bibitem{Ngtransverse}
Lenhard Ng.
\newblock Combinatorial knot contact homology and transverse knots.
\newblock {\em Adv. Math.}, 227(6):2189--2219, 2011, arXiv:1010.0451.

\bibitem{OV}
Hirosi Ooguri and Cumrun Vafa.
\newblock {Knot invariants and topological strings}.
\newblock {\em Nucl.Phys.}, B577:419--438, 2000, hep-th/9912123.

\bibitem{Ooguri_Vafa_worldsheet}
Hirosi Ooguri and Cumrun Vafa.
\newblock {World sheet derivation of a large N duality}.
\newblock {\em Nucl.Phys.}, B641:3--34, 2002, hep-th/0205297.

\bibitem{shende}
Vivek Shende.
\newblock {The conormal torus is a complete knot invariant}.
\newblock 2016, arXiv:1604.03520.

\bibitem{sylvan}
Zachary Sylvan.
\newblock {On partially wrapped Fukaya categories}.
\newblock 2016, arXiv:1604.02540.

\bibitem{viro}
Oleg Viro.
\newblock Encomplexing the writhe.
\newblock In {\em Topology, ergodic theory, real algebraic geometry}, volume
  202 of {\em Amer. Math. Soc. Transl. Ser. 2}, pages 241--256. Amer. Math.
  Soc., Providence, RI, 2001.

\bibitem{Witten:1992fb}
Edward Witten.
\newblock {Chern-Simons gauge theory as a string theory}.
\newblock {\em Prog.Math.}, 133:637--678, 1995, hep-th/9207094.

\end{thebibliography}

\end{document}